\newcommand{\TheTitle}{Disjoint Spread Systems and Fault Location}
\newcommand{\TheAuthors}{Charles J. Colbourn, Bingli Fan, and Daniel Horsley}
\headers{\TheTitle}{\TheAuthors}
\title{{\TheTitle}\thanks{The first author's research was supported in part by the National Science Foundation under Grant No. 1421058.
The third author's research was supported by Australian Research Council grants DE120100040 and DP150100506.}}
\author{
  Charles J. Colbourn\thanks{CIDSE,  Arizona State University,
  Tempe AZ 85287-8809,
  U.S.A.  (\email{colbourn@asu.edu})}
  \and
  Bingli Fan\thanks{Mathematics,
Beijing Jiaotong University,
Beijing, China (\email{blfan@bjtu.edu.cn})}
\and
  Daniel Horsley\thanks{Mathematical Sciences,
Monash University,
Vic 3800, Australia (\email{daniel.horsley@monash.edu})}}
\def \mod#1{{\:({\rm mod}\ #1)}}
\def \pmod#1{{\:({\rm mod}\ #1)}}
\begin{document}
\maketitle
\begin{abstract}
When $k$ factors each taking one of $v$ levels may affect the correctness or performance of a complex system, a test is selected by setting each factor to one of its levels and determining whether the system functions as expected (passes the test) or not (fails).
In our setting, each test failure can be attributed to at least one faulty (factor, level) pair.
A nonadaptive test suite is a selection of such tests to be executed in parallel.
One goal is to minimize the number of tests in a test suite from which we can determine which (factor, level) pairs are faulty, if any.
In this paper, we determine the number of tests needed to locate faults when exactly one (or at most one) pair is faulty.
To do this, we address an equivalent problem, to determine how many set partitions of a set of size $N$  exist in which each partition contains $v$ classes and no two classes in the partitions are equal.
\end{abstract}

\begin{keywords} Locating array, covering array, Sperner partition system, Baranyai's theorem. \end{keywords}
\begin{AMS} 05B30 (primary), 05A18, 05D99, 62K05, 68P10 (secondary) \end{AMS}

\section{Introduction}

Determining the presence and location of faults in complex systems encompasses a wide variety of problems.
When many factors affecting correctness or performance are present, each having a variety of possible options or levels, combinatorial test suites have been widely studied to reveal the presence of faults arising from interactions that result from a small set of factors being set to specific levels \cite{GOA,Hartman,NieL-CS}.
Combinatorial test suites that guarantee that every such interaction appears in a test are known as covering arrays.
Covering arrays provide a method to reveal the presence of faults  \cite{croatia,Hartman}, but they are inadequate to determine which interaction(s) account for the faulty behaviour.
 Colbourn and McClary \cite{CMjoco} extended  covering arrays to provide sufficient information to identify all faults when few faults, each involving few factors, are present.

Formally, there are $k$ factors $F_1, \dots, F_k$.
Each factor $F_i$ has a set  of $s_i$ possible values ({\em levels}) $S_i = \{v_{i1}, \dots, v_{i{s_i}}\}$.
A {\em test} is an assignment, for each $i$ with $1 \leq i \leq k$, of a level from $v_{i1}, \dots, v_{i{s_i}}$ to $F_i$.
A test, when executed, can {\em pass} or {\em fail}.
For any $t$-subset $I \subseteq \{1,\dots,k\}$ and levels $\sigma_{i} \in S_{i}$ for $i \in I$, the set $\{(i,\sigma_{i}) : i \in I\}$ is a {\em $t$-way interaction}, or an interaction of {\em strength} $t$.
Thus a test on $k$ factors contains  ({\em covers}) $\binom{k}{t}$ interactions of strength $t$.
A {\em test suite} is a collection of tests; the {\em outcomes} are the corresponding set of pass/fail results.
A fault is evidenced by a failure outcome for a test.
A fault is rarely due to a complete $k$-way interaction; rather it is the result of one or more faulty interactions of strength smaller than $k$ covered in the test.
Tests are executed concurrently, so that testing is {\em nonadaptive} or {\em predetermined}.

We employ a matrix representation.
An array $A$ with $N$ rows, $k$ columns, and symbols in the $i$th column chosen from an alphabet $S_i$ of size $s_i$ is denoted as an $N \times k$ array of symbol type $(s_1,\dots,s_k)$.
A {\em $t$-way interaction} in $A$ is a choice of a set $I$ of $t$ columns, and the selection of a level $\sigma_{i} \in S_{i}$ for $i \in I$, represented as $T = \{(i,\sigma_{i}) : i \in I\}$.
For such an array $A=(a_{xy})$ and interaction $T$, define $\rho_A(T) = \{ r : a_{ri} = \sigma_{i} \mbox{ for each } i \in I\}$, the set of rows of $A$ in which the interaction is covered.
For a set of interactions $\cal T$,  $\rho_A({\cal T}) = \bigcup_{T \in {\cal T}} \rho_A(T)$.

Let ${\cal I}_t$ be the set of {\em all} $t$-way interactions for an array of symbol type $(s_1,\dots,s_k)$, and let $\overline{{\cal I}_t}$ be the set of all interactions of strength {\em at most} $t$. Consider an interaction $T \in \overline{{\cal I}_t}$ of strength less than $t$.
Any interaction $T'$ of strength $t$ that contains $T$ necessarily has $\rho_A(T') \subseteq \rho_A(T)$;  a subset ${\cal T}'$ of interactions in ${\cal I}_t$ is {\em independent} if there do not exist $T, T' \in {\cal T}'$ with $T \subset T'$.
Some  interactions are believed to cause faults.  To formulate arrays for testing, we assume limits on both the number of interactions causing faults and their strengths.

\renewcommand{\tabcolsep}{6pt}
As in \cite{CMjoco}, this leads to a variety of arrays $A$ for testing a system with $N$ tests and  $k$ factors having $(s_1,\dots,s_k)$ as the numbers of levels, defined in Table \ref{defn:faultarray}.
It defines {\em mixed covering arrays} (MCAs), {\em covering arrays} (CAs), {\em locating arrays} (LAs), and {\em detecting arrays} (DAs).
When all factors have the same number of levels $v$, we replace $(s_1,\dots,s_k)$ with $v$ in the notation.

\begin{table}[ht]
\begin{center}
\begin{tabular}{|p{1.7in}|p{3.0in}|}
\hline
Array & Definition \\
\hline
\hline
{\bf Covering Arrays:} \rule{0mm}{3.5mm}& $\rho_A(T) \neq \emptyset$ for all $T \in \overline{{\cal I}_t}$ \\ \hline
MCA$(N;t,k,(s_1,\dots,s_k))$ & \\
CA$(N;t,k,v)$ & also $v = s_1 = \cdots = s_k$\\
\hline
\hline
{\bf Locating Arrays:} \rule{0mm}{3.5mm}& $\rho_A({\cal T}_1) = \rho_A({\cal T}_2)  \Leftrightarrow {\cal T}_1 = {\cal T}_2$ whenever... \\\hline
 $(d,t)$-LA$(N;k,(s_1,\dots,s_k))$ &  ${\cal T}_1, {\cal T}_2 \subseteq {{\cal I}_t}$,  $|{\cal T}_1| = d$, and $|{\cal T}_2| = d$\\
$(\overline{d},t)$-LA$(N;k,(s_1,\dots,s_k))$ &  ${\cal T}_1, {\cal T}_2 \subseteq {{\cal I}_t}$,  $|{\cal T}_1| \leq d$, and $|{\cal T}_2| \leq d$\\
 $(d,\overline{t})$-LA$(N;k,(s_1,\dots,s_k))$ &  ${\cal T}_1, {\cal T}_2 \subseteq {\overline{{\cal I}_t}}$,  $|{\cal T}_1| = d$,  $|{\cal T}_2| = d$, and ${\cal T}_1$ and  ${\cal T}_2$ are independent\\
$(\overline{d},\overline{t})$-LA$(N;k,(s_1,\dots,s_k))$ & ${\cal T}_1, {\cal T}_2 \subseteq {\overline{{\cal I}_t}}$,  $|{\cal T}_1| \leq d$,  $|{\cal T}_2| \leq d$, and ${\cal T}_1$ and  ${\cal T}_2$ are independent\\
\hline
\hline
{\bf Detecting Arrays:} \rule{0mm}{3.5mm}& $\rho_A(T) \subseteq \rho_A({\cal T})  \Leftrightarrow T \in  {\cal T}$ whenever...\\ \hline
 $(d,t)$-DA$(N;k,(s_1,\dots,s_k))$ & $T \in {{\cal I}_t}$, ${\cal T} \subseteq {{\cal I}_t}$,  and $|{\cal T}| \leq d$\\
 $(d,\overline{t})$-DA$(N;k,(s_1,\dots,s_k))$ & $T \in \overline{{\cal I}_t}$, ${\cal T} \subseteq \overline{{\cal I}_t}$,   $|{\cal T}| \leq d$, and ${\cal T}\cup\{T\}$ is independent\\
\hline
\end{tabular}
\end{center}
\caption{Arrays for determining faults}\label{defn:faultarray}
\end{table}

Detecting arrays may permit faster recovery than locating arrays because faulty interactions can be found by simply listing all interactions that appear only within failed tests.
Locating arrays have been utilized in applications to measurement and testing \cite{Aldaco15}, but few constructions are known.
Mart\'inez {\sl et al.} \cite{Martinez10} develop adaptive analogues and establish feasibility conditions for a locating array to exist.
In \cite{Shi12} and \cite{TangCY12} the minimum number of rows in a locating array is determined when the number of factors is quite small.
Recursive constructions when $(d,t)=(1,2)$ are given in \cite{CFan}.

Even when $(d,t)=(1,1)$, so that we are locating one faulty level of one factor, the smallest locating and detecting arrays are not known.
In this paper, we focus on locating arrays of strength  1.
We determine the smallest number of rows $N$ in a $(1,1)$-LA$(N;k,v)$.
To do this, we treat the equivalent problem of determining, for given $N$ and $v$, the largest $k$ for which a $(1,1)$-LA$(N;k,v)$ exists.
This maximum is denoted ${\rm LAK}_{(1,1)}(N,v)$.

Because we focus on the case $(d,t)=(1,1)$, we are locating one faulty member in a finite population (consisting of the (factor, level) choices).
Hence our problem belongs to the province of combinatorial search \cite{Aigner88}, search theory  \cite{Ahlswede87}, or combinatorial group testing \cite{DuHwang}.
These are well-researched topics, and a complete treatment is not possible here.
Among variants closest to the one that we examine, R\'enyi \cite{Renyi61} and Katona \cite{Katona66} consider tests in which elements are partitioned into $v$ classes, and the tester is told which class, if any, contains the faulty item; for more recent work, see \cite{Baranyai78a,DeBonis98,Meng12}.
Unlike our situation, these locate a factor, not a factor and a level; and each test has $v$ outcomes, not two.

It appears that the search problem arising from locating arrays of strength one, while in some ways similar to problems in the literature, has not been studied.
We can reformulate questions about certain covering, detecting, and locating arrays as follows:

\begin{proposition}\label{turntosets}\mbox{}
\begin{enumerate}
\item A $CA(N;2,k,v)$ is equivalent to a set of $k$ partitions of $\{1,\dots,N\}$, each partition having $v$ classes,  such that each class in each partition has non-empty intersection with every class of every other partition.
\item A $(1,1)$-detecting array on $v$ symbols with $N$ rows and $k$ columns is equivalent to a set of $k$ partitions of $\{1,\dots,N\}$, each partition having $v$ classes,  such that none of the $kv$ classes is a subset of another.
\item A $(1,1)$-locating array on $v$ symbols with $N$ rows and $k$ columns is equivalent to a set of $k$ partitions of $\{1,\dots,N\}$, each partition having $v$ (possibly empty) classes, such that no two of the $kv$ classes are equal.
\item A $(\bar{1},1)$-locating array on $v$ symbols with $N$ rows and $k$ columns is equivalent to a set of $k$ partitions of $\{1,\dots,N\}$, each partition having $v$ classes, such that none of the $kv$ classes is empty and no two of the $kv$ classes are equal.
\end{enumerate}
\end{proposition}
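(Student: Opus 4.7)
The plan is to set up, once and for all, the obvious bijection between $N\times k$ arrays with symbols from $\{1,\dots,v\}$ and $k$-tuples of labeled partitions of $\{1,\dots,N\}$ into $v$ (possibly empty) classes: given $A=(a_{xy})$, column $j$ yields the partition $P_j$ whose $\sigma$-th class is $C_{j,\sigma}=\{r : a_{r,j}=\sigma\}$. The decisive observation is that the $kv$ classes $C_{j,\sigma}$ are exactly the row sets $\rho_A(\{(j,\sigma)\})$ of the $kv$ strength-one interactions. Conversely, any $k$-tuple of such labeled partitions reconstructs $A$. Once this correspondence is fixed, each of the four equivalences is simply the defining condition from Table~\ref{defn:faultarray}, restated in partition language.

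For Part 1, the covering condition $\rho_A(T)\neq\emptyset$ for $T\in\overline{\mathcal{I}_2}$ splits by strength: strength-two interactions $\{(i,\sigma),(j,\tau)\}$ with $i\neq j$ say $C_{i,\sigma}\cap C_{j,\tau}\neq\emptyset$, which subsumes the strength-one condition. For Part 2, specialize the detecting condition to $|\mathcal{T}|\leq 1$ and consider two cases: $\mathcal{T}=\emptyset$ forces $\rho_A(T)\not\subseteq\emptyset$, i.e.\ every class is nonempty; $\mathcal{T}=\{T'\}$ with $T\neq T'$ forces $\rho_A(T)\not\subseteq\rho_A(T')$, i.e.\ no class is contained in a distinct class; together these give exactly ``no class is a subset of another.'' For Part 3, the locating condition with $|\mathcal{T}_1|=|\mathcal{T}_2|=1$ just says the assignment $T\mapsto\rho_A(T)$ is injective on singletons, i.e.\ the $kv$ classes are pairwise distinct as subsets of $\{1,\dots,N\}$. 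Part 4 is Part 3 plus the additional cases $\mathcal{T}_i=\emptyset$, which force every class to be nonempty, exactly as in Part 2.

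The argument is essentially a bookkeeping translation, so the main obstacle is not any genuine combinatorics but the need to track which conditions do or do not permit empty classes inside a partition. In Part 3 empty classes are allowed, but at most one per partition (two empties would be equal classes); in Parts 1, 2, and 4 the stated condition forces every class to be nonempty. Once these empty/nonempty bookkeeping points are handled, each of the four statements follows immediately by rewriting the corresponding row of Table~\ref{defn:faultarray} in terms of the classes $C_{j,\sigma}$.
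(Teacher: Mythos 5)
Your proof is correct and follows essentially the same route as the paper's: identify each column with a labeled partition whose classes are exactly the row sets $\rho_A(\{(j,\sigma)\})$ of the $kv$ strength-one interactions, then translate each defining condition from Table~\ref{defn:faultarray}, with your explicit case split on $\mathcal{T}=\emptyset$ versus a singleton being only slightly more detailed than the paper's. (One trivial slip in your closing remark: distinctness in Part~3 forces at most one empty class among \emph{all} $kv$ classes, not merely one per partition; this does not affect the argument.)
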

\begin{proof}
Each column of an $N \times k$ array on $v$ symbols forms a partition of the row indices into $v$ classes.
For column $\gamma$ and symbol $\sigma$, $\rho(\{(\gamma,\sigma)\})$ is a class of the partition formed from column $\gamma$.
The statement that each class in each partition has non-empty intersection with every class of every other partition is equivalent to the requirement that all 2-way interactions be covered.
The statement that none of the $kv$ classes is a subset of another is the same as the statement that $\rho(\{(\gamma,\sigma)\}) \nsubseteq \rho(\{(\gamma',\sigma')\})$ when $(\gamma,\sigma) \neq (\gamma',\sigma')$. The statement that no two of the $kv$ classes are equal is the same as the statement that $\rho(\{(\gamma,\sigma)\}) \neq \rho(\{(\gamma',\sigma')\})$ when $(\gamma,\sigma) \neq (\gamma',\sigma')$. The statement that none of the $kv$ classes is empty is the same as the statement that $\rho(\{(\gamma,\sigma)\}) \neq \emptyset = \rho(\emptyset) $.
\end{proof}

Two cases in Proposition \ref{turntosets} have been studied.
The partitions from  $CA(N;2,k,v)$s have been studied as ``qualitatively independent partitions'' \cite{renyi}.
Given $N$,  the largest $k$ for which a $CA(N;2,k,v)$ exists is known only when $v=2$ \cite{katona,kleitman}, although useful bounds are known for all $v \geq 3$ \cite{cmmssy}.
By Proposition \ref{turntosets}, the (1,1)-detecting arrays are equivalent to the so-called  Sperner partition systems \cite{MeagherLi13,MeagherMS05}; the maximum $k$ for specified $N$ is determined only within certain congruence classes for $N$ modulo $v$.
Qualitatively independent partitions and Sperner partition systems yield (1,1)-locating arrays, but in general permit far fewer factors.
The problem for (1,1)-locating arrays appears to be more tractable, because only distinctness of the classes  is required.
Indeed in the remainder of the paper, we provide a complete solution for existence for (1,1)-, $(\bar{1},1)$-, $(1,\bar{1})$-, and $(\bar{1},\bar{1})$-locating arrays.
Our main result is as follows.

\begin{theorem}\label{mainThm}
Let $N$ and $v$ be integers such that $2 \leq v \leq N+1$.
Let $f = \lfloor \frac{N+1}{v} \rfloor$, $d=(f+1)v-N$, and
\[\Lambda(N,v)=\big\lfloor\tfrac{1}{d}\textstyle{\sum_{i=f-d+2}^{f}(f+1-i)}\tbinom{N}{i}\big\rfloor+\textstyle{\sum_{i=0}^{f-d+1}}\tbinom{N}{i} . \]
Then ${\rm LAK}_{(1,1)}(N,v)=\Lambda(N,v)$.
\end{theorem}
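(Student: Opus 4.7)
The plan is to prove the claimed equality by establishing matching upper and lower bounds on the maximum number $k$ of partitions specified by Proposition~\ref{turntosets}(3). For any collection of $k$ partitions of $\{1,\dots,N\}$ into $v$ possibly-empty classes with all $kv$ classes pairwise distinct, let $x_i$ denote the total number of classes of size $i$; then $\sum_i x_i=kv$, $\sum_i i\,x_i=kN$, and $x_i\leq\binom{N}{i}$.

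The upper bound would come from a weighted double count. Define $w\colon\mathbb{Z}_{\geq 0}\to\mathbb{Z}_{\geq 0}$ by $w(i)=\min((f+1-i)^+,d)$, so $w(i)=d$ for $i\leq f-d+1$, $w(i)=f+1-i$ for $f-d+2\leq i\leq f$, and $w(i)=0$ for $i\geq f+1$. The key inequality is that for every partition of $\{1,\dots,N\}$ into parts of sizes $s_1,\dots,s_v$, $\sum_{j=1}^v w(s_j)\geq d$. I would verify this in two cases: if some $s_j\leq f-d$ then $w(s_j)=d$ by itself suffices; otherwise every $s_j\geq f-d+1$, whence $w(s_j)=(f+1-s_j)^+$ and $\sum_j (f+1-s_j)^+\geq \sum_j (f+1-s_j)=(f+1)v-N=d$. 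Summing this inequality over the $k$ partitions and applying $x_i\leq\binom{N}{i}$ yields $kd\leq d\sum_{i=0}^{f-d+1}\binom{N}{i}+\sum_{i=f-d+2}^{f}(f+1-i)\binom{N}{i}$, and integrality of $k$ then gives $k\leq \Lambda(N,v)$.

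For the lower bound I must construct $\Lambda(N,v)$ partitions. Tracing the equality conditions of the weight inequality, I should aim for partitions of one of two types: Type~A, with every class size in $[f-d+1,f+1]$; or Type~B, with exactly one class of size at most $f-d$ and all remaining classes of size at least $f+1$. The size profile should take $x_i=\binom{N}{i}$ for every $i\leq f-d+1$ (so that every small subset of $\{1,\dots,N\}$ is used as a class) and saturate the $x_i$ for $f-d+2\leq i\leq f$ up to a controlled shortfall of $B\bmod d$, where $B=\sum_{i=f-d+2}^{f}(f+1-i)\binom{N}{i}$. Implementing this then requires arranging the chosen subsets into partitions: each Type-B small class $S$ must be paired with a decomposition of $\{1,\dots,N\}\setminus S$ into $v-1$ large parts, while the chosen classes of sizes in $[f-d+1,f+1]$ must be packaged into Type-A partitions. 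Baranyai's theorem (and suitable mixed-part-size extensions) should provide the near-parallel decompositions of the complete $i$-uniform hypergraphs on $\{1,\dots,N\}$ needed to assemble everything consistently.

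The hard part will be executing this packing. Ordinary Baranyai decomposes a single uniform hypergraph at a time, whereas our construction must coordinate decompositions across several sizes, handle Type-A and Type-B partitions simultaneously, and leave precisely the correct deficit $B\bmod d<d$ among the sizes $i\in[f-d+2,f]$. I anticipate needing either a multi-size generalization of Baranyai, an inductive peel-off of one partition at a time that maintains the invariant $x_i\leq\binom{N}{i}$, or an integer-flow/Hall-type realizability certificate, possibly supplemented by one corrective partition to absorb the remainder. Verifying the divisibility and flow conditions, and the compatibility between the Type-A and Type-B pieces, is where the bulk of the technical work should lie.
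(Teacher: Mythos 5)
Your upper bound is correct and is essentially the paper's argument: your weight $w(i)=\min(\max(f+1-i,0),d)$ together with the observation that every $v$-part partition of $N$ has total weight at least $d$ is just a repackaging of the paper's ``defect'' count in Lemma~\ref{upperBoundsLemma} (which splits the shapes by whether the smallest entry is at most $f-d+1$ rather than capping the weight), and both yield $k\le\Lambda(N,v)$ after dividing by $d$ and rounding.

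The lower bound, however, is only a plan, and the part you defer as ``the bulk of the technical work'' is exactly where the paper's content lies; as written there is a genuine gap. Two things are missing. First, you need a proved multi-size analogue of Baranyai's theorem stating that \emph{any} admissible type (any multiset of $v$-shapes whose multiplicity profile satisfies $x_i\le\binom{N}{i}$ for all $i$) is realizable by pairwise distinct subsets; the paper establishes this (Theorem~\ref{genbar}) by the Brouwer--Schrijver circulation argument of Lemma~\ref{up1}, building a $\tau$-realization element by element via the integer flow theorem. You name this as one of several possible tools but prove none of them. Second, once that theorem is in hand, the ``packing'' and ``compatibility between Type-A and Type-B pieces'' you worry about become automatic, and what actually remains is the arithmetic: exhibiting an explicit multiset of $v$-shapes of cardinality exactly $\Lambda(N,v)$ and verifying admissibility coordinate by coordinate. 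That verification is not routine --- at sizes $x=f$ and $x=f+1$ it requires the binomial inequalities of Lemmas~\ref{easyUnsatIneq}, \ref{floorPlusOneUnsatIneqs} and \ref{threePartsLemma} (with separate small-case checks for $v=3,4$), and the case $N\equiv v-1\pmod v$ needs special care because then $d=v+1>v$, the shape $L_f$ with smallest entry $f$ does not exist, and the paper must introduce the shape $L_*$ with entries $f-1,f-1,f,\dots,f,f+1$ and rebalance the counts of $L_{f-1}$ accordingly (Definition~\ref{optFamDef}(3) and Lemma~\ref{admissibleLemma}). Your sketch of the target size profile is consistent with the paper's, but without the realizability theorem and the admissibility verification the construction is not established.
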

Observe that if $v \geq N+2$, then a $(1,1)$-LA$(N;k,v)$ cannot exist for $k > 0$ because no partition of $\{1,\ldots,N\}$ into $v$ distinct classes exists.

An overview of the remainder of the paper follows.
In Section \ref{sec:bar} we generalize a theorem of Baranyai, in order to reduce the construction of locating arrays on $N$ rows to the existence of certain sets of integer partitions of $N$.
In Section \ref{sec:types} we specify precisely the integer partitions employed to realize the bound.
In order to establish that these integer partitions are indeed feasible, in Section \ref{sec:bin} we derive some technical lemmas giving inequalities on sums of binomial coefficients.
Then in Section \ref{sec:main} we use the binomial inequalities to establish that the partitions of Section \ref{sec:types} are feasible, thereby completing the proof of Theorem \ref{mainThm}.
In Section \ref{sec:variant} we give a complete solution for  $(\bar{1},1)$-locating arrays (Theorem \ref{1barThm}) and for $(1,\bar{1})$- and $(\bar{1},\bar{1})$-locating arrays (Theorem \ref{barvar}).
We conclude in Section \ref{sec:asym} with a brief discussion of the asymptotic differences between (1,1)-locating arrays and covering arrays of strength two.

\section{Partial Spreads and Disjoint Spread Systems}\label{sec:bar}

Let $X$ be a set of size $N$.
Let ${\mathcal P}(X)$ be the set of all subsets of $X$, the {\em powerset} of $X$.
Let ${\mathcal S} \subseteq {\mathcal P}(X)$; sets in ${\mathcal S}$ are termed {\em blocks}.
A {\em partial spread} in ${\mathcal S}$ is a subset ${\mathcal C} \subseteq {\mathcal S}$ so that whenever $C,C' \in {\mathcal C}$, either $C \cap C' = \emptyset$ or $C = C'$.
It is a {\em spread} when, in addition, $\bigcup_{C \in {\mathcal C}} C = X$.
When a partial spread contains precisely $v$ blocks, it is a $v$-{\em partial spread}.

Two partial spreads ${\mathcal C}_1$ and ${\mathcal C}_2$ are {\em disjoint} if whenever $C_1 \in {\mathcal C}_1$ and $C_2 \in {\mathcal C}_2$, we have $C_1 \neq C_2$.
A partition of ${\mathcal S}$ into  partial spreads $\{ {\mathcal C}_i : 1 \leq i \leq k\}$ is a {\em disjoint partial spread system;} when each partial spread is a spread, it is a {\em disjoint spread system.}

A {\em shape} is a multiset of nonnegative integers $\{a_1,\dots,a_\ell\}$ for which $\sum_{i=1}^\ell a_i \leq N$; it is a {\em $v$-shape} if it contains exactly $v$ entries. For a shape $S$, we denote the number of entries of $S$ that are equal to $x$ by $\mu_S(x)$. A {\em type} is a multiset of shapes; it is a {\em $v$-type} if every shape in it is a $v$-shape. Any partial spread ${\mathcal C}$ has shape $\{ |C| : C \in {\mathcal C}\}$.
A disjoint partial spread system has a type that consists of the shapes of its partial spreads.

Baranyai \cite{Baranyai} proved a remarkable theorem, implying that when $|X| = uv$ and ${\mathcal S}$ consists of all subsets of $X$ of size $u$, there always exists a disjoint spread system on ${\mathcal S}$; each spread in the system is an $v$-spread.
In order to address existence of locating arrays, we treat a more general situation.
To formulate the question precisely, fix a set $X = \{1,\dots , N\}$.
Let $k$ be a nonnegative integer.
For $1 \leq i \leq k$, let $M_i$ be a shape.
We are to determine whether there exists a disjoint partial spread system on a set ${\mathcal S} \subseteq {\mathcal P}(X)$ having  type $\{ M_1 , \dots, M_k\}$.

A type $\{ M_1, \dots, M_k\}$ is {\em realizable} when there is a disjoint partial spread system of that type on some ${\mathcal S} \subseteq {\mathcal P}(X)$.
A type $\{ M_1, \dots, M_k\}$ is {\em admissible} when
\[\medop \sum_{i=1}^k \mu_{M_i}(x) \leq \mbinom{N}{x} \quad \mbox{for } 0 \leq x \leq N,\]
and is {\em full} when we have equality in each of these inequalities. In a realizable type, the inequalities hold because no set in ${\mathcal P}(X)$ can appear in more than one partial spread.

\begin{proposition}\label{typeequiv}\mbox{}
\begin{enumerate}
    \item
There exists a $(1,1)$-locating array on $v$ symbols with $N$ rows and $k$ columns if and only if there exists a realizable $v$-type on $\{1,\ldots,N\}$ consisting of $k$ shapes.
    \item
There exists a $(\bar{1},1)$-locating array on $v$ symbols with $N$ rows and $k$ columns if and only if there exists a realizable $v$-type on $\{1,\ldots,N\}$ consisting of $k$ shapes such that no shape contains a $0$.
\end{enumerate}
\end{proposition}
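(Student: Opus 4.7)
My plan is to deduce both parts directly from Proposition~\ref{turntosets}(iii)--(iv) via the following dictionary between the two settings: for each column of the array, the $v$ symbol-classes form a family of $v$ pairwise disjoint subsets of $\{1,\ldots,N\}$, hence a $v$-partial spread ${\mathcal C}_i$ of some $v$-shape $M_i$ (in fact a $v$-spread, since the classes cover $\{1,\ldots,N\}$). The condition ``no two of the $kv$ classes are equal'' from Proposition~\ref{turntosets}(iii) translates precisely into the condition that the $k$ partial spreads are pairwise disjoint as families, so their union ${\mathcal S}$ is partitioned by them and they together form a disjoint partial spread system of type $\{M_1,\ldots,M_k\}$.

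For the forward direction of~(i), I will start from a $(1,1)$-LA, apply Proposition~\ref{turntosets}(iii) to obtain $k$ partitions of $\{1,\ldots,N\}$ with $kv$ pairwise distinct classes, set ${\mathcal C}_i$ to be the class-set of the $i$th partition, and let $M_i$ be its shape; the disjoint partial spread system $\{{\mathcal C}_1,\ldots,{\mathcal C}_k\}$ then realizes $\{M_1,\ldots,M_k\}$. For the converse, given a realizing disjoint partial spread system $\{{\mathcal C}_1,\ldots,{\mathcal C}_k\}$, each ${\mathcal C}_i$ consists of $v$ pairwise disjoint subsets of $\{1,\ldots,N\}$; if these cover $\{1,\ldots,N\}$ they form a partition and I feed the result into Proposition~\ref{turntosets}(iii) directly, and otherwise I first absorb the leftover $L_i=\{1,\ldots,N\}\setminus\bigcup_{B\in{\mathcal C}_i} B$ into a chosen block of ${\mathcal C}_i$ to manufacture a partition, being careful that the enlarged block not duplicate any other block in the system.

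Part~(ii) is argued in exactly the same way, using Proposition~\ref{turntosets}(iv) in place of (iii); the extra hypothesis ``none of the $kv$ classes is empty'' translates under the dictionary into ``no shape contains a $0$ entry'', since a $0$-entry in $M_i$ corresponds precisely to the empty block $\emptyset\in{\mathcal C}_i$. The main delicate step is the absorption in the converse of~(i): enlarging a block of ${\mathcal C}_i$ by $L_i$ might accidentally reproduce a block of some other ${\mathcal C}_j$ and violate distinctness. I expect this to be the principal obstacle, and would handle it by exploiting the $v$ available choices of block to enlarge within each ${\mathcal C}_i$, together with the admissibility bounds on shape multiplicities, to guarantee that a conflict-free absorption is always possible.
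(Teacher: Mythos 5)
Your dictionary between columns and spreads, and your forward direction, match the paper's own argument, which is essentially a two-line appeal to Proposition~\ref{turntosets}(3)--(4). You have also correctly put your finger on the one genuinely delicate point, which the paper's proof passes over in silence: a \emph{realizable} $v$-type is by definition witnessed by a disjoint \emph{partial} spread system, and since a shape need only satisfy $\sum a_i \le N$, the $v$ blocks of a partial spread ${\mathcal C}_i$ need not cover $\{1,\ldots,N\}$ and hence need not form a partition. So the converse implications in (i) and (ii) really do require an argument, and absorbing the leftover $L_i$ is the natural candidate.

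The gap is that your proposed mechanism for the absorption does not work as stated. If $|L_i|=1$ there are exactly $v$ candidate enlarged blocks $B_1\cup L_i,\ldots,B_v\cup L_i$; no two of these can occur in the same other partial spread (they all contain $L_i$, so they pairwise intersect), but each can already be a block of a \emph{different} ${\mathcal C}_j$, and since typically $k-1\gg v$ there is no counting reason why all $v$ candidates cannot be simultaneously blocked. The admissibility inequalities $\sum_\gamma \mu_{M_\gamma}(x)\le\binom{N}{x}$ constrain how many blocks of each \emph{size} occur, not how many coincide with a particular set, so they do not rescue the greedy choice either. A clean way to close the gap avoids absorption entirely and routes through types: realizable implies admissible; the proof of Lemma~\ref{upperBoundsLemma} applies verbatim to shapes summing to at most $N$ (the defect of such a shape is still at least $d$), so $k\le\Lambda(N,v)$; and any $k$ shapes of the admissible type $\mathcal{L}(N,v)$ form an admissible $v$-type in which every shape sums to exactly $N$, whose realization via Theorem~\ref{genbar} consists of genuine spreads and hence yields the locating array. (Equivalently, one can observe that the paper only ever needs the two implications that are immediate: an LA gives a realizable type, and a realizable type all of whose shapes sum to $N$ gives an LA.) The same remark applies to your treatment of part (ii).
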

\begin{proof}\mbox{}
\begin{enumerate}
    \item
By Proposition \ref{turntosets}(3), a $(1,1)$-LA$(N;k,v)$ is equivalent to a disjoint partial spread system on some ${\mathcal S} \subseteq {\mathcal P}(X)$ that consists of $k$ spreads (note that the condition in Proposition \ref{turntosets} that no two of the $kv$ classes are equal implies in particular that at most one of the $kv$ classes is the empty set). Such a spread system exists if and only if there exists a realizable $v$-type on $\{1,\ldots,N\}$ consisting of $k$ shapes.
    \item
By Proposition \ref{turntosets}(4), a $(\bar{1},1)$-LA$(N;k,v)$ is equivalent to a disjoint partial spread system on some ${\mathcal S} \subseteq {\mathcal P}(X) \setminus \{\emptyset\}$ that consists of $k$ spreads. Such a spread system exists if and only if there exists a realizable $v$-type on $\{1,\ldots,N\}$ consisting of $k$ shapes such that no shape contains a 0.
\end{enumerate}
\end{proof}

We show that admissible types are realizable.

\begin{lemma}\label{full}
If every full admissible type is realizable, then every admissible type is realizable.
\end{lemma}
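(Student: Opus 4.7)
The plan is to reduce to the full case by padding any admissible type with singleton shapes. Given an admissible type $\{M_1,\dots,M_k\}$, admissibility means that the deficit
\[ d(x) = \tbinom{N}{x} - \medop\sum_{i=1}^{k} \mu_{M_i}(x) \]
is nonnegative for every $0 \leq x \leq N$. Since $x \leq N$, the one-element multiset $\{x\}$ is itself a legitimate shape. Let $T'$ be the type obtained from $\{M_1,\dots,M_k\}$ by adjoining, for each $x \in \{0,\dots,N\}$, exactly $d(x)$ copies of the singleton shape $\{x\}$. Then $\sum_{M \in T'} \mu_M(x) = \binom{N}{x}$ for every $x$, so $T'$ is simultaneously admissible and full.

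By the hypothesis of the lemma, $T'$ is realizable, so there exists $\mathcal{S} \subseteq \mathcal{P}(X)$ supporting a disjoint partial spread system of type $T'$. This system contains, in particular, $k$ partial spreads whose shapes match $M_1,\dots,M_k$ (one per shape, using the natural inclusion of $\{M_1,\dots,M_k\}$ as a sub-multiset of $T'$). Because the full system is pairwise block-disjoint, any subcollection of its partial spreads is as well. Therefore these $k$ selected partial spreads constitute a disjoint partial spread system of type $\{M_1,\dots,M_k\}$ on the union of their blocks, witnessing realizability of the original type.

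The whole argument is a bookkeeping reduction: the only items requiring verification are that $d(x) \geq 0$ (admissibility), that $\{x\}$ qualifies as a shape for each $0 \leq x \leq N$, and that block-disjointness is inherited by sub-collections of the partial spreads. None of these is an obstacle, and I expect no genuine difficulty. The value of the lemma lies entirely in the license it grants to focus later constructive effort on full admissible types, which are combinatorially cleaner because every subset of $X$ of every size must appear in exactly one of the partial spreads.
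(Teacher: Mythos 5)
Your proof is correct and is essentially the paper's own argument: the paper adjoins singleton shapes $\{\ell\}$ one at a time and iterates until the type is full, whereas you adjoin all $d(x)$ missing singletons at once, but the padding-with-singletons idea and the observation that a sub-collection of a disjoint partial spread system is again one of the restricted type are identical. No gap.
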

\begin{proof}
If  $\{M_1,\dots,M_k\}$ is not full, adjoining $M_{k+1} = \{\ell\}$ if
$\binom{N}{\ell} > \sum_{\gamma=1}^k\mu_{M_\gamma}(\ell)$
produces an admissible type; if $\{M_1,\dots,M_{k+1}\}$ is realizable, so is $\{M_1,\dots,M_k\}$.
Iterating this, we can restrict our attention to the case when $\{M_1,\dots,M_k\}$ is full.
\end{proof}
Let ${\cal M} = \{M_1,\dots,M_k\}$ be an admissible full type for the set $X=\{1,\ldots,N\}$,
in which $M_\gamma$ is the shape $\{m_{\gamma,1}, \dots, m_{\gamma,p_\gamma}\}$ for $1 \leq \gamma \leq k$.
A realization of ${\cal M}$ would consist of spreads $\{S_{\gamma,1}, \dots, S_{\gamma,p_\gamma}\}$ for $1 \leq \gamma \leq k$ where $|S_{\gamma,i}|=m_{\gamma,i}$ for $1 \leq i \leq p_{\gamma}$ and $1 \leq \gamma \leq k$. We construct ${\cal M}$ by sequentially assigning the elements $1,\ldots,N$ of $X$ to the sets $S_{\gamma,i}$.
Let $X_\tau = \{1,\dots,\tau\}$ for $0 \leq \tau \leq N$.
The next definition describes the
assignments of the elements of $X_\tau$ to the sets $S_{\gamma,i}$ that obey the obvious necessary condition to be completable to a realization of ${\cal M}$.
A {\em $\tau$-realization} of ${\cal M}$ is a collection of sets $\{ S_{\gamma,j}^\tau \subseteq X_\tau : 1 \leq \gamma \leq k, 1 \leq j \leq p_\gamma\}$ such that for every subset $S \subseteq X_\tau$, and for every $0 \leq \ell \leq N$,
\[|\{(\gamma,j):S_{\gamma,j}^\tau=S \mbox{ and } m_{\gamma,j} = \ell\}|=\tbinom{N-\tau}{\ell-|S|}.\]

When $(M_1,\dots,M_k)$ is an admissible full type, setting $S_{\gamma,j}^0 = \emptyset$ for $1 \leq \gamma \leq k, 1 \leq j \leq p_\gamma$ gives a 0-realization.
We adapt an elegant method of Brouwer and Schrijver \cite{BrouwerS} to show:
\begin{lemma}\label{up1}
If the full admissible type ${\cal M}= \{M_1,\dots,M_k\}$ has a $\tau$-realization and $\tau < N$, then ${\cal M}$ has a $(\tau+1)$-realization.
\end{lemma}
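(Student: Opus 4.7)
I will follow the Brouwer--Schrijver rounding strategy: encode the extension step as a bipartite transportation problem with integer right-hand side, write down an explicit fractional solution, and extract an integer solution from total unimodularity of the constraint matrix. Each pair $(\gamma,j)$ must choose whether to grow $S_{\gamma,j}^\tau$ by adding the new element $\tau+1$ or to leave it unchanged, and Pascal's identity
\[
\binom{N-\tau}{\ell-|S|} = \binom{N-\tau-1}{\ell-|S|} + \binom{N-\tau-1}{\ell-|S|-1}
\]
converts the $(\tau+1)$-realization counting condition into the demand that exactly $\binom{N-\tau-1}{\ell-|S|-1}$ of the $\binom{N-\tau}{\ell-|S|}$ pairs currently populating bucket $(S,\ell)$ must grow.

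Introduce flow variables $y_{\gamma,S,\ell}$ counting the growers among the $a_{\gamma,S,\ell} := |\{j : S_{\gamma,j}^\tau = S,\ m_{\gamma,j}=\ell\}|$ pairs attached to $\gamma$ in bucket $(S,\ell)$. The constraints are the bounds $0 \leq y_{\gamma,S,\ell} \leq a_{\gamma,S,\ell}$, the column balances $\sum_\gamma y_{\gamma,S,\ell} = \binom{N-\tau-1}{\ell-|S|-1}$, and, because at most one block of a partial spread can absorb $\tau+1$, the row inequalities $\sum_{S,\ell} y_{\gamma,S,\ell} \leq 1$ for each $\gamma$, sharpened to equality whenever $M_\gamma$ is a $v$-shape summing to $N$ (so the corresponding partial spread is in fact a spread and must absorb $\tau+1$). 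The fractional assignment $y_{\gamma,S,\ell} := a_{\gamma,S,\ell}\cdot \frac{\ell-|S|}{N-\tau}$ meets all constraints: the column identity follows from $\frac{r}{n}\binom{n}{r}=\binom{n-1}{r-1}$; the bounds from $\ell-|S|\leq N-\tau$ on nonempty buckets; and each row sum telescopes to $\frac{1}{N-\tau}\sum_j (m_{\gamma,j}-|S_{\gamma,j}^\tau|)$, which equals $1$ on spread rows (using $\sum_j m_{\gamma,j}=N$ together with the partition invariant $\sum_j |S_{\gamma,j}^\tau|=\tau$) and is at most $1$ otherwise.

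The coefficient matrix of this system is the incidence matrix of a bipartite graph, hence totally unimodular; with integer data this makes the feasible polytope integral, so the fractional feasible point above forces the existence of an integer feasible vector $y$. Choosing, for each $(\gamma,S,\ell)$, any $y_{\gamma,S,\ell}$ of the $a_{\gamma,S,\ell}$ candidate pairs to grow and leaving the rest fixed then specifies the $(\tau+1)$-realization. The main obstacle I expect is purely bookkeeping: one must verify that the partition invariant $\sum_j |S_{\gamma,j}^\tau|=\tau$ on each spread row propagates inductively from the trivial $0$-realization, so that the fractional row sums come out to exactly $1$ on those rows. This is a short induction using the extension rule itself, after which the argument reduces to a standard appeal to bipartite LP integrality.
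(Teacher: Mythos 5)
Your proposal is correct and is essentially the paper's own argument: the paper encodes the same row and column constraints as a circulation on an auxiliary digraph, places the identical fractional values $\frac{m_{\gamma,j}-|S_{\gamma,j}^\tau|}{N-\tau}$ on the choice arcs, and rounds via the integer flow theorem, which is interchangeable with your bipartite-transportation/total-unimodularity formulation. The inductive invariant you flag as the remaining bookkeeping (needed so that every row sum is at most $1$, including on non-spread rows where $\sum_j m_{\gamma,j}<N$) is used implicitly in the paper's proof as well, so your treatment matches the paper's level of detail.
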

\begin{proof}
Counting all occurrences of elements of $X$ in a putative realization of  ${\cal M}$, \[ T = \medop\sum_{\gamma=1}^k \medop\sum_{j=1}^{p_\gamma} m_{\gamma,j} = \medop\sum_{\ell=0}^N \ell \tbinom{N}{\ell} = N 2^{N-1}.\]
Let $\chi = k - \frac{T}{N} = k-2^{N-1}$, which is an integer.
Each element of $X$ is to appear in a block of $2^{N-1}$ partial spreads, and so must be omitted in $\chi$ partial spreads.

Suppose that  ${\cal M}$ has the $\tau$-realization $\{ S_{\gamma,j}^\tau \subseteq X_\tau : 1 \leq \gamma \leq k, 1 \leq j \leq p_\gamma\}$.
Form a directed multigraph $D$ with vertices $\{ s,t,x \} \cup \{ y_1,\dots,y_k \} \cup \{ (S,\ell) : S \subseteq X_\tau, |S| \leq \ell \leq N\}$.
The arcs of $D$ are:
\begin{enumerate}
\item $\{ (s,y_\gamma), (y_\gamma,x) : 1 \leq \gamma \leq k\}$;
\item $\{ a_{\gamma,j} = (y_\gamma,(S,\ell)) : 1 \leq \gamma \leq k, 1 \leq j \leq p_\gamma \mbox{ when } S =S_{\gamma,j}^\tau \mbox{ and } m_{\gamma,j} = \ell \}$;
\item $\{ ((S,\ell),t) :  S \subseteq X_\tau \mbox{ and } |S| \leq \ell \leq N\}$; and
\item $\{(x,t), (t,s)\}$.
\end{enumerate}

Now we place a flow $f$ on the arcs of $D$. Set
\[ f(a) = \left \{ \begin{array}{ccl}
1 & \mbox{if} & a = (s,y_\gamma) \mbox{ for } 1 \leq \gamma \leq k;\\[0.2cm]
k-2^{N-1} & \mbox{if} & a = (x,t);\\[0.2cm]
k & \mbox{if} & a = (t,s);\\[0.2cm]
\binom{N-1-\tau}{\ell-1-|S|} & \mbox{if} & a = ((S,\ell),t);\\[0.2cm]
\frac{m_{\gamma,j}-|S_{\gamma,j}^\tau|}{N-\tau} &   \mbox{if} & a = a_{\gamma,j} = (y_\gamma,(S,\ell));\\[0.2cm]
1-\frac{ \sum_{j=1}^{p_\gamma} [ m_{\gamma,j}-|S_{\gamma,j}^\tau|]}{N-\tau} &   \mbox{if} & a =  (y_\gamma,x).
\end{array} \right . \]
Now we verify that flow $f$ is a {\em circulation} (inflow equals outflow  at every vertex).
\begin{description}
\item[Flow at $\boldsymbol{s}$:] Inflow and outflow  both equal $k$.
\item[Flow at $\boldsymbol{t}$:] Outflow  is $k$. Inflow is $k-2^{N-1} + \sum_{S \subseteq X_\tau} \sum_{\ell=|S|}^N \binom{N-1-\tau}{\ell-1-|S|} $.
Outflow minus inflow is   \[
\begin{array}{rcl}  2^{N-1} - \sum_{S \subseteq X_\tau} \sum_{\ell=|S|}^N \binom{N-1-\tau}{\ell-1-|S|} & = & 2^{N-1} - \sum_{\sigma=0}^\tau \binom{\tau}{\sigma} \sum_{\ell=\sigma}^N \binom{N-1-\tau}{\ell-1-\sigma} \\[0.2cm]
& = & 2^{N-1} - 2^{N-1-\tau} \sum_{\sigma=0}^\tau \binom{\tau}{\sigma}  \\[0.2cm]
& = & 2^{N-1} - 2^{N-1}  = 0.  \\[0.2cm]
\end{array} \]

\item[Flow at $\boldsymbol{x}$:] Outflow is $k-2^{N-1}$.  Inflow is \[ \sum_{\gamma=1}^k \left [ 1-\frac{ \sum_{j=1}^{p_\gamma} [ m_{\gamma,j}-|S_{\gamma,j}^\tau|]}{N-\tau} \right ] =
k - \mfrac{T}{N-\tau}+\mfrac{1}{N-\tau}\medop\sum_{\gamma=1}^k   \medop\sum_{j=1}^{p_\gamma} |S_{\gamma,j}^\tau| . \]
Because the collection is a $\tau$-realization,  for every $S \subseteq X_\tau$, there are $\binom{N-\tau}{\ell-|S|}$ pairs $(\gamma,j)$ for which $S_{\gamma,j} = S$ and $\ell = m_{\gamma,j}$.
Hence \[
\begin{array}{rcl}  \sum_{\gamma=1}^k   \sum_{j=1}^{p_\gamma} |S_{\gamma,j}^\tau| &
= & \sum_{\ell=0}^N \sum_{\sigma=0}^{\tau} \sigma \binom{\tau}{\sigma} \binom{N-\tau}{\ell-\sigma}\\[0.2cm]
&= & \sum_{\sigma=0}^{\tau} \sigma \binom{\tau}{\sigma} \sum_{\ell=\sigma}^N  \binom{N-\tau}{\ell-\sigma}\\[0.2cm]
&= &  2^{N-\tau} \sum_{\sigma=0}^{\tau} \sigma \binom{\tau}{\sigma}= \tau 2^{N-1}.
\end{array} \]
Thus the inflow is \[ k - \mfrac{T}{N-\tau}+\frac{1}{N-\tau}\medop\sum_{\gamma=1}^k   \medop\sum_{j=1}^{p_\gamma} |S_{\gamma,j}^\tau| =  k - \mfrac{N2^{N-1}}{N-\tau}+\mfrac{\tau 2^{N-1}}{N-\tau} = k-2^{N-1} . \]
\item[Flow at $\boldsymbol{y_\gamma}$, $\boldsymbol{1 \leq \gamma \leq k}$:] Inflow and outflow at $y_\gamma$ both equal 1.
\item[Flow at $\boldsymbol{(S,\ell)}$,  $\boldsymbol{S \subseteq X_\tau}$, $\boldsymbol{|S| \leq \ell \leq N}$:] Outflow at $(S,\ell)$ is $\binom{N-1-\tau}{\ell-1-|S|}$.
Now $S_{\gamma,j}^\tau=S$ when $\ell = m_{\gamma,j}$ for $\binom{N-\tau}{\ell-|S|}$ choices of $(\gamma,j)$ because we have employed a $\tau$-realization.
Thus the inflow at $(S,\ell)$ is
$\frac{\ell-|S|}{N-\tau} \binom{N-\tau}{\ell-|S|} = \binom{N-1-\tau}{\ell-1-|S|}$.
\end{description}
Hence $f$ is a circulation.
By the integer flow theorem \cite{EdmondsKarp72,FordFulkerson}, there is an integer-valued circulation $g$ in which $|g(a) - f(a)| < 1$ for every arc $a$.
Form the $(\tau+1)$-realization by setting $S_{\gamma,j}^{\tau+1} = S_{\gamma,j}^\tau \cup \{ \tau+1\}$ when $g(a_{\gamma,j}) = 1$, and $S_{\gamma,j}^{\tau+1} = S_{\gamma,j}^\tau$ when $g(a_{\gamma,j}) = 0$.
Because $y_\gamma$ has outflow 1, the sets within a group remain disjoint.
Because $(S,\ell)$ has outflow $\binom{N-1-\tau}{\ell-1-|S|}$, the number of sets in the $(\tau+1)$-realization equal to $S \cup \{\tau+1\}$ is $\binom{N-(\tau+1)}{\ell-(|S|+1)}$, as required.
Because $S_{\gamma,j}^\tau=S$ when $\ell = m_{\gamma,j}$ exactly  $\binom{N-\tau}{\ell-|S|}$ times, and of these exactly  $\binom{N-1-\tau}{\ell-1-|S|}$ have $\tau+1$ adjoined, $S_{\gamma,j}^{\tau+1}=S$ when $\ell = m_{\gamma,j}$ exactly  $\binom{N-\tau}{\ell-|S|} - \binom{N-1-\tau}{\ell-1-|S|} = \binom{N-(\tau+1)}{\ell-|S|}$ times, as required.
\end{proof}

An $N$-realization is a realization. So we have proved:

\begin{theorem}\label{genbar}
Type ${\cal M}$  is realizable if and only if it is admissible.
\end{theorem}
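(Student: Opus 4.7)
The plan is to prove both implications of Theorem \ref{genbar}, noting that essentially all of the technical work has already been carried out in Lemmas \ref{full} and \ref{up1}, so the proof amounts to assembling these pieces and handling the endpoints of the induction.

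For the easy direction (realizable $\Rightarrow$ admissible), I would observe that in any disjoint partial spread system, each set $S \subseteq X$ occurs in at most one block of at most one partial spread: within a single partial spread, two blocks are either disjoint or equal (and can only be equal by being the same block), while across partial spreads, disjointness of the spread system forbids repetition. Summing over sizes gives $\sum_{i=1}^k \mu_{M_i}(x) \le \binom{N}{x}$ for every $x$, which is exactly admissibility.

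For the hard direction (admissible $\Rightarrow$ realizable), my plan is to chain together the preceding lemmas. By Lemma \ref{full} it suffices to treat the case of a \emph{full} admissible type ${\cal M}$. For such a type, I would first verify that setting $S_{\gamma,j}^0 = \emptyset$ for all $(\gamma,j)$ yields a $0$-realization: the defining condition collapses to
\[
|\{(\gamma,j) : m_{\gamma,j} = \ell\}| = \tbinom{N}{\ell}
\]
for every $\ell$, which is precisely the fullness equality $\sum_{\gamma} \mu_{M_\gamma}(\ell) = \binom{N}{\ell}$. I would then iterate Lemma \ref{up1} a total of $N$ times, producing in turn a $1$-realization, $2$-realization, \ldots, and finally an $N$-realization of ${\cal M}$.

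The final step is to check that an $N$-realization really is a realization. In the definition of $\tau$-realization with $\tau=N$, we have $X_N = X$ and $\binom{N-\tau}{\ell-|S|}$ becomes $\binom{0}{\ell-|S|}$, which equals $1$ if $\ell=|S|$ and $0$ otherwise. Hence for each $(\gamma,j)$ one has $|S_{\gamma,j}^N| = m_{\gamma,j}$, and no set $S$ is reused by two different pairs $(\gamma,j) \ne (\gamma',j')$ that share the same target size. The within-group disjointness required for each $\{S_{\gamma,1}^N,\dots,S_{\gamma,p_\gamma}^N\}$ to be a partial spread was already guaranteed inductively at every application of Lemma \ref{up1}, since the flow construction there forces outflow $1$ at each $y_\gamma$, meaning at most one block in group $\gamma$ acquires the new element $\tau+1$. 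The main potential obstacle here is really just bookkeeping — confirming that the very last increment $\tau = N-1 \to N$ is legal (it is, because Lemma \ref{up1} only requires $\tau < N$) and that all three structural requirements (correct shapes, disjoint pairs within a group, distinct blocks across groups) survive to the end; once these are checked, the theorem follows immediately.
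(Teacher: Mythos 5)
Your proposal is correct and follows exactly the paper's argument: the forward direction by the no-repeated-block observation, and the converse by reducing to full types via Lemma \ref{full}, starting from the trivial $0$-realization, and iterating Lemma \ref{up1} to reach an $N$-realization. The endpoint checks you spell out (that fullness gives a $0$-realization and that an $N$-realization is a genuine realization) are exactly the facts the paper records, more tersely, in the remarks surrounding those lemmas.
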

\begin{proof}
Every realizable type is admissible.
So suppose that ${\cal M}$ is admissible.
By Lemma \ref{full} we can assume that $\cal M$ is full.
Then form a 0-realization, and apply Lemma \ref{up1} $N$ times to form an $N$-realization of ${\cal M}$.
\end{proof}

This method is constructive, explicitly producing the disjoint partial spread system of the desired type.
Often a very special case  of Theorem \ref{genbar}  is called Baranyai's theorem:

\begin{corollary}
Let $N$, $u$, and $v$ be positive integers with $uv \leq N$.
Write $\binom{N}{k} = \alpha v + \beta$ with $0 \leq \beta < v$.
Then the set of all $k$-subsets of an $N$-set can be partitioned into $\alpha$ partial spreads each containing $v$ $u$-subsets, and (when $\beta > 0$) one partial spread containing $\beta$.
\end{corollary}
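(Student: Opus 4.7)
The plan is to derive the corollary as a direct special case of Theorem \ref{genbar}. The argument requires only specifying an appropriate type and checking admissibility.

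First I would set up the type $\mathcal{M}$: take $\alpha$ copies of the shape $\{u, u, \ldots, u\}$ with $v$ entries all equal to $u$, and, if $\beta > 0$, adjoin one further shape $\{u, u, \ldots, u\}$ with $\beta$ entries. The hypothesis $uv \leq N$ guarantees that the first kind of shape is valid (its entries sum to $uv \leq N$), and since $\beta < v$ we have $\beta u < N$, so the second kind is valid as well.

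Next I would verify that $\mathcal{M}$ is admissible. For any $x \neq u$, every shape in $\mathcal{M}$ has $\mu_{M_i}(x) = 0$, so $\sum_i \mu_{M_i}(x) = 0 \leq \binom{N}{x}$. For $x = u$, summing over all shapes gives $\alpha v + \beta$, which equals $\binom{N}{u}$ by the defining relation. Hence $\mathcal{M}$ is admissible.

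By Theorem \ref{genbar} the type $\mathcal{M}$ is therefore realizable: there exist $\mathcal{S} \subseteq \mathcal{P}(X)$ and a partition of $\mathcal{S}$ into disjoint partial spreads whose multiset of shapes is precisely $\mathcal{M}$. Since every block appearing in the realization has size $u$, and the total number of such blocks equals $\binom{N}{u}$, the family $\mathcal{S}$ must coincide exactly with the collection of all $u$-subsets of $X$. The resulting partial spreads furnish the required partition: $\alpha$ of them contain $v$ disjoint $u$-subsets each, and, when $\beta > 0$, one contains $\beta$ disjoint $u$-subsets. I anticipate no real obstacle, as the proof is essentially immediate once Theorem \ref{genbar} is in hand; the only content is matching the parameters of $\mathcal{M}$ to the statement.
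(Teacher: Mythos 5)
Your argument is correct and matches the paper's (implicit) reasoning exactly: the paper states this corollary without proof as "a very special case of Theorem \ref{genbar}," and the intended justification is precisely your construction of the admissible type with $\alpha$ all-$u$ $v$-shapes plus one $\beta$-entry shape, followed by the observation that fullness at size $u$ forces $\mathcal{S}$ to be all $u$-subsets. No gaps.
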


Theorem \ref{genbar} is one of many variants of Baranyai's theorem.
Brouwer and Schrijver \cite{BrouwerS}  survey a broad class of generalizations incorporating results from \cite{Baranyai,Baranyai78,Baranyai79}.
While most such generalizations focus on partitions into blocks of size $u$,  disjoint spread systems with all blocks of size at most $u$ have also been examined \cite{BergeJohnson77,BrouwerT81,Johnson78}.
More recently, Bahmanian \cite{Bahmanian12} develops ``detachment'' techniques for such problems; Theorem \ref{genbar} can also be deduced from the proof of his Theorem~6.4.

\section{Maximal admissible \texorpdfstring{$\boldsymbol{v}$}{}-types}\label{sec:types}

In light of Proposition \ref{typeequiv} and Theorem \ref{genbar}, our task for a given $N$ and $v$ is to find the maximum number of shapes in an admissible $v$-type on a set of size $N$. We first establish an upper bound. We then construct examples of $v$-types that meet this bound.

\begin{lemma}\label{upperBoundsLemma}
Let $N$ and $v$ be integers such that $2 \leq v \leq N+1$, let $f = \lfloor \frac{N+1}{v} \rfloor$, and let $d=(f+1)v-N$.
Let $\mathcal{M}$ be an admissible $v$-type for a set of size $N$.
Then $|\mathcal{M}| \leq \Lambda(N,v)$.
\end{lemma}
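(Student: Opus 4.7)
The strategy is to design a nonnegative integer weight function $w$ on possible entry values so that every $v$-shape contributes weight at least $d$, then sum the resulting shape-wise inequality over the $k$ shapes of $\mathcal{M}$ and apply admissibility to the right-hand side.

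Concretely, set $w(x)=\min(f+1-x,\,d)$ for $0\leq x\leq f$ and $w(x)=0$ for $x>f$. The key claim is: for any $v$-shape $M=\{e_1,\dots,e_v\}$ with $\sum_{j=1}^v e_j\leq N$, one has $\sum_{j=1}^v w(e_j)\geq d$. Since $N=(f+1)v-d$, the feasibility of $M$ is equivalent to $\sum_j(f+1-e_j)\geq d$. A short two-case argument closes the claim: if some $e_j\leq f-d+1$ then $w(e_j)=d$ already suffices, while if every $e_j\geq f-d+2$, then $w(e_j)=f+1-e_j$ whenever $e_j\leq f$ and $w(e_j)=0\geq f+1-e_j$ whenever $e_j>f$, so $\sum_j w(e_j)\geq\sum_j(f+1-e_j)\geq d$.

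Summing the key claim over all shapes and interchanging the order of summation gives
\[kd\leq\sum_{i=1}^{k}\sum_{j=1}^{v}w(e_{i,j})=\sum_{x=0}^{N}w(x)\sum_{i=1}^{k}\mu_{M_i}(x)\leq\sum_{x=0}^{N}w(x)\binom{N}{x},\]
where the last step is admissibility. Evaluating the right-hand side using the definition of $w$ yields
\[kd\leq d\sum_{x=0}^{f-d+1}\binom{N}{x}+\sum_{x=f-d+2}^{f}(f+1-x)\binom{N}{x}.\]
Dividing by $d$ and using that $k$ and $\sum_{x=0}^{f-d+1}\binom{N}{x}$ are both integers, only the second sum contributes fractionally, so the floor in $\Lambda(N,v)$ emerges and produces $k\leq\Lambda(N,v)$ exactly.

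The main design issue is the weight function itself. The naive choice $w(x)=f+1-x$ would recover only the weaker LP bound $\frac{1}{d}\sum_{x=0}^{f}(f+1-x)\binom{N}{x}$, which in general strictly exceeds $\Lambda(N,v)$; capping at $d$ on the small side converts the $\frac{1}{d}$-weighted contributions for $x\leq f-d+1$ into plain binomial coefficients, matching the structure of $\Lambda(N,v)$, while truncating to $0$ on the large side is what keeps the shape-wise inequality valid in the case split. Boundary conventions when $f-d+1<0$ (so the first sum is empty) or $f-d+2$ falls below zero are handled by the convention that empty sums vanish, and the argument goes through unchanged.
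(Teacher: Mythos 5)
Your proof is correct and is essentially the paper's argument in a different packaging: your capped weight $w(x)=\min(\max(f+1-x,0),d)$ is exactly the paper's notion of \emph{defect} truncated at $d$, and your two-case verification of the key claim mirrors the paper's split of $\mathcal{M}$ into the shapes having an entry at most $f-d+1$ and the rest. The only cosmetic difference is that you fold the two counts into a single weighted admissibility inequality before extracting the floor, whereas the paper bounds the two subfamilies separately (taking the floor on the second, since its size is an integer) and adds.
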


\begin{proof}
Each shape in $\mathcal{M}$ is a multiset of $v$ nonnegative integers that sum to $N$, and $N=(f+1)v-d<(f+1)v$. So any shape in $\mathcal{M}$ must contain entries less than $f+1$. We quantify this more precisely as follows.
Define the \emph{defect} of an entry $x$ of a shape in $\mathcal{M}$ to be $\max(f+1-x,0)$ and the defect $\delta(M)$ of a shape $M$ in $\mathcal{M}$ to be the sum of the defects of its entries. Then each shape in $\mathcal{M}$ has defect at least $d$.
We can use this fact to bound $|\mathcal{M}|$ based on the number of times integers with positive defect can appear in shapes in $\mathcal{M}$.

Let $\mathcal{M}'$ be the multiset of all shapes in $\mathcal{M}$ whose smallest entry is at most $f-d+1$, and let $\mathcal{M}''=\mathcal{M} \setminus \mathcal{M}'$.  Then $|\mathcal{M}'| \leq \sum_{i=0}^{f-d+1}\tbinom{N}{i}$ because $\mathcal{M}$ is admissible. Also,
\[d|\mathcal{M}''| \leq \medop\sum_{M \in \mathcal{M}''} \delta(M) \leq \medop\sum_{i=f-d+2}^{f}(f+1-i)\tbinom{N}{i},\]
where the first inequality follows because each shape in $\mathcal{M}''$ has defect at least $d$ and the
second inequality is obtained by summing the defects of the entries of the shapes in $\mathcal{M}''$. The result now follows because $|\mathcal{M}|=|\mathcal{M}'|+|\mathcal{M}''|$.
\end{proof}

In what follows, we often take $f = \lfloor \frac{N+1}{v} \rfloor$ and $d=(f+1)v-N$. Note that $2 \leq d \leq v$ when $N \not\equiv v-1 \mod{v}$ and that $d=v+1$ when $N \equiv v-1 \mod{v}$.
For each $N$ and $v$, we now define a $v$-type that we subsequently show is admissible and meets the bound given by Lemma \ref{upperBoundsLemma}.

\begin{definition}\label{optFamDef}
Let $N$ and $v$ be integers such that $2 \leq v \leq N+1$, let $f = \lfloor \frac{N+1}{v} \rfloor$, and let $d=(f+1)v-N$.
For $0 \leq i \leq f$, define $L_i(N,v)$ to be the unique $v$-shape whose smallest entry is equal to $i$ and whose remaining entries differ by at most $1$.
If $v \geq 3$ and $N \equiv v-1 \pmod{v}$, define $L_*(N,v)$ to be the $v$-shape that has two entries equal to $f-1$, $v-3$ entries equal to $f$ and one entry equal to $f+1$.
Define $\mathcal{L}(N,v)$ to be the $v$-type constructed from an empty multiset as follows.
\begin{enumerate}
    \item
Add $\binom{N}{i}$ copies of $L_i(N,v)$ for $0 \leq i \leq f-2$.
    \item
If $N \not\equiv v-1 \mod{v}$, then add $\binom{N}{f-1}$ copies of $L_{f-1}(N,v)$ and $\lfloor\frac{1}{d}(\tbinom{N}{f}-s)\rfloor$ copies of $L_f(N,v)$, where $s=\sum_{i=f-d+2}^{f-1}(d-f-1+i)\tbinom{N}{i}$.
    \item
If $N \equiv v-1 \mod{v}$, then add $\binom{N}{f-1}-2\lceil\frac{s'}{v+1}\rceil$ copies of $L_{f-1}(N,v)$ and $\lceil\frac{s'}{v+1}\rceil$ copies of $L_*(N,v)$, where $s'=\sum_{i=f-v+1}^{f-2}(v-f+i)\tbinom{N}{i}$.
\end{enumerate}
\end{definition}

It is not immediately apparent that $\lfloor\frac{1}{d}(\tbinom{N}{f}-s)\rfloor$ and $\binom{N}{f}-2\lceil\frac{s'}{v+1}\rceil$ are nonnegative.
We show this in Section \ref{sec:main}, where we also show that the  type $\mathcal{L}(N,v)$ is admissible.
To assist with those tasks we require some elementary inequalities involving binomial coefficients which we establish in Section \ref{sec:bin}.

The size of the type $\mathcal{L}(N,v)$ meets the bound of Lemma \ref{upperBoundsLemma}:

\begin{lemma}\label{meetBoundLemma}
Let $N$ and $v$ be integers such that $2 \leq v \leq N+1$, let $f = \lfloor \frac{N+1}{v} \rfloor$, and let $d=(f+1)v-N$.  Then $|\mathcal{L}(N,v)|=\Lambda(N,v)$.
\end{lemma}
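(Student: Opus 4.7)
The plan is to verify directly by algebraic manipulation that the size of $\mathcal{L}(N,v)$ as given by Definition \ref{optFamDef} equals $\Lambda(N,v)$, treating the two cases $N \not\equiv v-1 \pmod{v}$ and $N \equiv v-1 \pmod{v}$ separately. (Nonnegativity of the multiplicities used to define $\mathcal{L}(N,v)$ will be handled in Section \ref{sec:main} and need not be addressed here.) In both cases I would begin by noting that $\Lambda(N,v) = \sum_{i=0}^{f-d+1}\binom{N}{i} + \lfloor\tfrac{1}{d}\sum_{i=f-d+2}^{f}(f+1-i)\binom{N}{i}\rfloor$ and that the definition gives $\binom{N}{i}$ copies of each $L_i(N,v)$ for $0\leq i\leq f-2$, so after subtracting $\sum_{i=0}^{f-d+1}\binom{N}{i}$ from both sides the identity to prove will involve only the tail $i\in\{f-d+2,\ldots,f\}$.

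In the case $N \not\equiv v-1 \pmod{v}$, I have $|\mathcal{L}(N,v)| = \sum_{i=0}^{f-1}\binom{N}{i} + \lfloor\tfrac{1}{d}(\binom{N}{f}-s)\rfloor$. Because $\sum_{i=f-d+2}^{f-1}\binom{N}{i}$ is an integer it can be moved inside the floor, so the target identity reduces (after clearing the denominator $d$) to
\[\medop\sum_{i=f-d+2}^{f-1}(f+1-i)\tbinom{N}{i} + \tbinom{N}{f} = d\medop\sum_{i=f-d+2}^{f-1}\tbinom{N}{i} + \tbinom{N}{f} - s.\]
Using the trivial identity $d = (f+1-i)+(d-f-1+i)$ for $i\in\{f-d+2,\ldots,f-1\}$ this rearranges to $s = \sum_{i=f-d+2}^{f-1}(d-f-1+i)\binom{N}{i}$, which is precisely how $s$ is defined in Definition \ref{optFamDef}(2).

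In the case $N \equiv v-1 \pmod{v}$ we have $d = v+1$ and, since each copy of $L_*(N,v)$ replaces two copies of $L_{f-1}(N,v)$,
\[|\mathcal{L}(N,v)| = \medop\sum_{i=0}^{f-1}\tbinom{N}{i} - \lceil\tfrac{s'}{v+1}\rceil.\]
Using $n - \lceil a\rceil = \lfloor n-a\rfloor$ for integer $n$, the equation $|\mathcal{L}(N,v)| = \Lambda(N,v)$ collapses (after multiplying through by $v+1$) to the comparison
\[(v+1)\medop\sum_{i=f-v+1}^{f-1}\tbinom{N}{i} - s' \stackrel{?}{=} \medop\sum_{i=f-v+1}^{f}(f+1-i)\tbinom{N}{i}.\]
Expanding $s'$ using $(v+1)-(v-f+i) = f+1-i$ for $i\in\{f-v+1,\ldots,f-2\}$, the left side becomes $\sum_{i=f-v+1}^{f-2}(f+1-i)\binom{N}{i} + (v+1)\binom{N}{f-1}$, while the right side is $\sum_{i=f-v+1}^{f-2}(f+1-i)\binom{N}{i} + 2\binom{N}{f-1} + \binom{N}{f}$. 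The equality therefore reduces to $(v-1)\binom{N}{f-1} = \binom{N}{f}$, which follows from $N = fv-1$ via the ratio $\binom{N}{f}/\binom{N}{f-1} = (N-f+1)/f = (v-1)$.

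The arithmetic is entirely routine; the only subtle point is recognising the two key identities $d = (f+1-i)+(d-f-1+i)$ (case 1) and $\binom{N}{f} = (v-1)\binom{N}{f-1}$ when $N\equiv v-1 \pmod{v}$ (case 2). The latter is what makes the somewhat asymmetric construction in Definition \ref{optFamDef}(3), with the exceptional shape $L_*(N,v)$ replacing pairs of $L_{f-1}(N,v)$'s, exactly match the floor expression in $\Lambda(N,v)$; I expect this to be the main thing to isolate clearly in the write-up.
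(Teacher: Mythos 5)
Your proof is correct and follows essentially the same route as the paper's: both arguments reduce the identity to a direct floor/ceiling manipulation in the two congruence cases, hinging on exactly the two identities you isolate, namely $d=(f+1-i)+(d-f-1+i)$ and $\binom{N}{f}=(v-1)\binom{N}{f-1}$ when $N\equiv v-1\pmod{v}$. The paper presents the computation as a chain of equalities rather than a reduction to a target identity, but the content is the same.
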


\begin{proof}
If $N \not\equiv v-1 \mod{v}$, then
\begin{align*}
|\mathcal{L}(N,v)| &= \textstyle{\left\lfloor\frac{1}{d}\left(\tbinom{N}{f}-\sum_{i=f-d+2}^{f-1}(d-f-1+i)\tbinom{N}{i}\right)\right\rfloor+\sum_{i=0}^{f-1}\tbinom{N}{i}} \\
&= \textstyle{\left\lfloor\frac{1}{d}\left(\tbinom{N}{f}+\sum_{i=f-d+2}^{f-1}(f+1-i)\tbinom{N}{i}\right)\right\rfloor+\sum_{i=0}^{f-d+1}\tbinom{N}{i}} \\
&= \textstyle{\left\lfloor\frac{1}{d}\sum_{i=f-d+2}^{f}(f+1-i)\tbinom{N}{i}\right\rfloor+\sum_{i=0}^{f-d+1}\tbinom{N}{i}}.
\end{align*}

If $N \equiv v-1 \mod{v}$, then $d=v+1$ and
\begin{align*}
|\mathcal{L}(N,v)| &= \textstyle{\tbinom{N}{f-1}-\left\lceil\frac{1}{v+1}\sum_{i=f-v+1}^{f-2}(v-f+i)\tbinom{N}{i}\right\rceil+\sum_{i=0}^{f-2}\tbinom{N}{i}} \\
&= \textstyle{\tbinom{N}{f-1}+\left\lfloor\frac{1}{v+1}\sum_{i=f-v+1}^{f-2}(f+1-i)\tbinom{N}{i}\right\rfloor+\sum_{i=0}^{f-v}\tbinom{N}{i}} \\
&= \textstyle{\left\lfloor\frac{1}{v+1}\sum_{i=f-v+1}^{f}(f+1-i)\tbinom{N}{i}\right\rfloor+\sum_{i=0}^{f-v}\tbinom{N}{i}},
\end{align*}
where the last equality follows because $\binom{N}{f}=(v-1)\binom{N}{f-1}$ (noting that $f=\frac{N+1}{v}$) and hence $\tbinom{N}{f-1}=\frac{2}{v+1}\tbinom{N}{f-1}+\frac{1}{v+1}\tbinom{N}{f}$.
\end{proof}

We conclude this section by establishing that the type $\mathcal{L}(N,v)$ is well-defined and admissible when $v \in \{2,N,N+1\}$.
This allows us to restrict our attention to cases in which $3 \leq v < N$ for the remainder of the paper.

\begin{lemma}\label{bigvLemma}
Let $N$ and $v$ be integers such that $2 \leq v \leq N+1$. If $v \in \{2,N,N+1\}$, then the type $\mathcal{L}(N,v)$ is well-defined and admissible.
\end{lemma}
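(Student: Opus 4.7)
The plan is to handle the three extreme values $v \in \{2, N, N+1\}$ by a direct case split on $v$, exploiting the fact that each such choice forces $f$ or $v-f$ to be very small, so Definition \ref{optFamDef} collapses to a near-trivial construction. In each case the task is to identify which of the two sub-cases of the definition applies, to verify that each floor, ceiling, and sum involved is nonnegative (well-definedness), and then to read off the multiset of entries of the resulting shapes and compare with the corresponding binomial coefficients (admissibility).

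When $v = N+1$, one gets $f=1$ and $d = v+1$, so case 3 applies. The step-1 range $0 \leq i \leq f-2$ is empty, and the sum $s'$ has empty or strictly-negative index range and so vanishes. The construction therefore produces a single copy of $L_0(N,N+1)=\{0,1,\ldots,1\}$, and admissibility is immediate from $\mu(0)=1=\binom{N}{0}$ and $\mu(1)=N=\binom{N}{1}$. When $v = N$, one has $f=1$ and $d=N=v$, so case 2 applies; the sum $s$ contributes only its $i=0$ term, giving $s = N-2$ (or $0$ when $N = 2$), and the construction collapses to one copy of $L_0(N,N)=\{0,1,\ldots,1,2\}$ plus $\lfloor 2/N \rfloor$ copies of $L_1(N,N)=\{1,\ldots,1\}$. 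Admissibility then follows by comparing $\mu(0),\mu(1),\mu(2)$ with $\binom{N}{0},\binom{N}{1},\binom{N}{2}$.

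For $v=2$, there are two sub-cases according to the parity of $N$: if $N$ is even then $f = N/2$, $d=2$, and case 2 applies, while if $N$ is odd then $f = (N+1)/2$, $d=3$, and case 3 applies; in both, the relevant auxiliary sum ($s$ or $s'$) has an empty index range and vanishes. The construction then amounts to taking $\binom{N}{i}$ copies of $\{i,N-i\}$ for $0 \leq i \leq f-1$, augmented when $N$ is even by $\lfloor \tbinom{N}{N/2}/2 \rfloor$ copies of $\{N/2,N/2\}$. Since the shapes $\{i,N-i\}$ with $0 \leq i \leq f-1$ have $i \neq N-i$, multiplying out gives $\mu(j) = \binom{N}{j}$ for every $j \neq N/2$, while $\mu(N/2) \leq \binom{N}{N/2}$ by the floor, giving admissibility.

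The one subtlety in well-definedness is that $L_*(N,v)$ is only defined for $v \geq 3$, so for $v = 2$ one must check that $L_*$ is invoked with multiplicity zero; this is forced by $s' = 0$ in the $N$ odd, $v=2$ case. There is no genuine obstacle here: the entire argument is bookkeeping, and the only thing requiring care is to track the index ranges of the binomial sums so that Definition \ref{optFamDef} is seen to degenerate correctly at these boundary values of $v$, where $f$ is either $1$ or roughly $N/v$ and the ``middle'' terms that drive the general argument in Section \ref{sec:main} are entirely absent.
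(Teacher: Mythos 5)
Your proposal is correct and follows essentially the same route as the paper: evaluate Definition \ref{optFamDef} directly at the boundary values $v\in\{2,N,N+1\}$, observe that the auxiliary sums $s$ and $s'$ vanish or reduce to a single term so that $\mathcal{L}(N,v)$ collapses to the explicit small families (e.g.\ $\binom{N}{i}$ copies of $\{i,N-i\}$, or a single copy of $\{0,1,\ldots,1,2\}$ or $\{0,1,\ldots,1\}$), and check admissibility by comparing entry multiplicities with binomial coefficients. The paper's proof is merely terser, simply listing what $\mathcal{L}(N,v)$ consists of in each case; your additional care about $L_*$ being invoked with multiplicity zero when $v=2$ is a fair point that the paper leaves implicit.
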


\begin{proof}
Applying the definition of $\mathcal{L}(N,v)$ we have the following.
If $v=2$ and $N$ is odd, then $\mathcal{L}(N,v)$ consists of exactly $\binom{N}{i}$ copies of the shape $\{i,N-i\}$ for $0 \leq i \leq \frac{N-1}{2}$.
If $v=2$ and $N$ is even, then $\mathcal{L}(N,v)$ consists of exactly $\binom{N}{i}$ copies of the shape $\{i,N-i\}$ for $0 \leq i \leq \frac{N-2}{2}$ and exactly $\frac{1}{2}\binom{N}{N/2}$ copies of the shape $\{\frac{N}{2},\frac{N}{2}\}$.
If $v = N$ and $v \geq 3$, then $\mathcal{L}(N,v)$ consists of exactly one copy of the $v$-shape $\{0,1,\ldots,1,2\}$.
If $v = N+1$ and $v \geq 3$, then $\mathcal{L}(N,v)$ consists of exactly one copy of the $v$-shape $\{0,1,\ldots,1,1\}$.
In each case $\mathcal{L}(N,v)$ is admissible.
\end{proof}

\section{Binomial inequalities}\label{sec:bin}

In this section, we establish certain inequalities on binomial coefficients, and on sums of binomial coefficients.
Although some appear in the literature (\cite{JohnsonNewmanWinston78}, for example), and some are certainly folklore, we prove them here for completeness.

\begin{lemma}
Let $N$ and $v$ be integers such that $3 \leq v < N$. Then
\begin{enumerate}\label{basicFactsLem}
    \item \label{binomRatioItem}
$\tbinom{N}{a-1} = \tfrac{a}{N-a+1}\tbinom{N}{a}$ for any $0 \leq a \leq N$;
    \item \label{binomRatioCorItem}
$\tbinom{N}{a-i} \leq \left(\tfrac{a}{N-a+1}\right)^i\tbinom{N}{a}$ for any $0 \leq a \leq N$ and $0 \leq i \leq a$;
    \item \label{fPlusOneItem}
$\tbinom{N}{a+1} \geq \frac{N(v-1)}{N+v}\tbinom{N}{a}$ for any $0 \leq a \leq \frac{N}{v}$; and
    \item \label{binomSumItem}
$\sum_{i=0}^{a-1}\tbinom{N}{i} < \tfrac{1}{v-2}\tbinom{N}{a}$ for any $1 \leq a \leq \frac{N}{v}$.
\end{enumerate}
\end{lemma}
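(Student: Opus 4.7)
The four claims are all routine consequences of the closed formula for binomial coefficients, and the plan is simply to derive them in order, using each as input to the next.

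For part (\ref{binomRatioItem}), I would just expand both sides using $\binom{N}{a}=\frac{N!}{a!(N-a)!}$; the ratio $\binom{N}{a-1}/\binom{N}{a}$ simplifies directly to $\frac{a}{N-a+1}$, which is the stated identity. For part (\ref{binomRatioCorItem}), I would iterate part (\ref{binomRatioItem}) $i$ times, obtaining
\[\tbinom{N}{a-i}=\medop\prod_{j=1}^{i}\frac{a-j+1}{N-a+j}\cdot\tbinom{N}{a}.\]
The key observation is that for every $j\geq 1$ the factor $\frac{a-j+1}{N-a+j}$ is at most $\frac{a}{N-a+1}$ (numerator only decreases in $j$, denominator only increases), so replacing each factor by $\frac{a}{N-a+1}$ yields the claimed bound.

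Part (\ref{fPlusOneItem}) is a single application of part (\ref{binomRatioItem}) with $a$ replaced by $a+1$: this gives $\binom{N}{a+1}=\frac{N-a}{a+1}\binom{N}{a}$, so I need only verify $\frac{N-a}{a+1}\geq\frac{N(v-1)}{N+v}$. Using $a\leq N/v$, one has $N-a\geq N(v-1)/v$ and $a+1\leq(N+v)/v$, and dividing the former by the latter gives exactly $\frac{N(v-1)}{N+v}$.

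Part (\ref{binomSumItem}) is the only mildly substantive step. The idea is to reindex $\sum_{i=0}^{a-1}\binom{N}{i}=\sum_{j=1}^{a}\binom{N}{a-j}$ and apply part (\ref{binomRatioCorItem}) term-by-term, producing a geometric sum with ratio $r=\frac{a}{N-a+1}$. Because $a\leq N/v$, one has $N-a+1>N(v-1)/v$, which gives $r<\frac{N/v}{N(v-1)/v}=\frac{1}{v-1}$, strictly less than $1$ since $v\geq 3$. Extending the geometric series to infinity, the sum is bounded by $\frac{r}{1-r}\binom{N}{a}<\frac{1/(v-1)}{1-1/(v-1)}\binom{N}{a}=\frac{1}{v-2}\binom{N}{a}$, which is exactly the strict inequality required. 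The only point to be careful about is that $v\geq 3$ makes $v-2\geq 1$ and $r<1$, so the geometric-series step is valid; otherwise there is no real obstacle and the proof is computational throughout.
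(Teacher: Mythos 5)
Your proposal is correct and follows essentially the same route as the paper: the factorial ratio identity for part (\ref{binomRatioItem}), iterating it with the monotonicity of the factors for part (\ref{binomRatioCorItem}), a direct application of part (\ref{binomRatioItem}) with the bound $a\leq N/v$ for part (\ref{fPlusOneItem}), and a geometric series with ratio below $\tfrac{1}{v-1}$ for part (\ref{binomSumItem}). No substantive differences.
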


\begin{proof}\mbox{}
\begin{enumerate}
    \item
This follows from a simple calculation.
    \item
We have
\[\tbinom{N}{a-i}\big/\tbinom{N}{a}=\medop\prod_{j=1}^i \big(\tbinom{N}{a-j}\big/\tbinom{N}{a-j+1}\big).\]
Because $\binom{N}{a-j}/\binom{N}{a-j+1} \leq \binom{N}{a-1}/\binom{N}{a}$ for $1 \leq j \leq i$, the  result follows from (\ref{binomRatioItem}).
    \item
This follows from (\ref{binomRatioItem}) because $a+1 \leq \frac{N}{v}+1$.
    \item
Note $\tfrac{a}{N-a+1} < \tfrac{1}{v-1}$ because $a \leq \frac{N}{v}$. Thus, using (\ref{binomRatioCorItem}), we have
\[\medop\sum_{i=0}^{a-1}\left(\tbinom{N}{i}\big/\tbinom{N}{a}\right) < \medop\sum_{j=1}^{a-1} \left(\tfrac{1}{v-1}\right)^j < \medop\sum_{j=1}^{\infty} \left(\tfrac{1}{v-1}\right)^j = \tfrac{1}{v-2}. \]
\end{enumerate}
\end{proof}

\begin{lemma}\label{easyUnsatIneq}
Let $N$ and $v$ be integers such that $3 \leq v < N$, and let $a = \lfloor\frac{N}{v}\rfloor$. Then
\[(v-1)\medop\sum_{i=0}^{a-1}\tbinom{N}{i} < \tbinom{N}{a+2}.\]
\end{lemma}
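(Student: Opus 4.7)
The plan is to reduce the claim to a polynomial inequality and then verify it. Applying Lemma \ref{basicFactsLem}(\ref{binomSumItem}) directly gives $(v-1)\sum_{i=0}^{a-1}\binom{N}{i} < \frac{v-1}{v-2}\binom{N}{a}$, but the consequence $\frac{v-1}{v-2}\binom{N}{a} \leq \binom{N}{a+2}$ can fail in tight small instances (for example $v=3$, $N=6$, $a=2$, where $\binom{N}{a+2} = \binom{N}{a}$), so a sharper bound on the sum is required.

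The case $a=1$ is routine: the sum equals $1$, $v-1 \leq N-2$, and $\binom{N}{3} > N-2$ for $N \geq 4$. For $a \geq 2$, since $\binom{N}{i-1}/\binom{N}{i} = i/(N-i+1)$ is increasing in $i$, this ratio is at most $r := (a-1)/(N-a+2)$ for $1 \leq i \leq a-1$. Telescoping yields $\binom{N}{a-1-\ell} \leq r^\ell \binom{N}{a-1}$, and summing the geometric series gives
\[\sum_{i=0}^{a-1}\binom{N}{i} < \binom{N}{a-1}\cdot\frac{N-a+2}{N-2a+3}.\]
Multiplying by $v-1$, applying $(v-1) \leq (N-a)/a$ (which follows from $N \geq va$), and rewriting $\binom{N}{a-1}$ and $\binom{N}{a+2}$ in terms of $\binom{N}{a}$ via Lemma \ref{basicFactsLem}(\ref{binomRatioItem}), the target inequality reduces, after cancelling a factor of $N-a$, to $(N-a+2)(a+1)(a+2) \leq (N-a+1)(N-a-1)(N-2a+3)$. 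Writing $b := N-a$ (so $b \geq (v-1)a \geq 2a$ since $v \geq 3$), this becomes
\[(b+2)(a+1)(a+2) \leq (b-1)(b+1)(b-a+3).\]

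The main obstacle is establishing this polynomial inequality. I will set $h(b) := (b-1)(b+1)(b-a+3) - (b+2)(a+1)(a+2)$ and show $h(b) \geq 0$ for all $b \geq 2a$ with $a \geq 2$. Direct expansion gives $h(2a) = 2a^3 + 4a^2 - 11a - 7$, which equals $3$ at $a=2$ and has positive derivative $6a^2 + 8a - 11$ for $a \geq 2$, so $h(2a) > 0$. To verify that $h$ is non-decreasing on $[2a,\infty)$, I compute $h'(b) = 3b^2 + 2(3-a)b - (a^2+3a+3)$ and observe that $h'(2a) = 7a^2 + 9a - 3 > 0$ and $h''(b) = 6b + 6 - 2a > 0$ for $b \geq 2a$, so $h' > 0$ throughout. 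Thus $h(b) \geq h(2a) > 0$ and the proof is complete.
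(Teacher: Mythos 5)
Your proof is correct, and it takes a genuinely different route from the paper's. The paper bounds the sum crudely by $\frac{1}{v-2}\binom{N}{a}$ (its Lemma \ref{basicFactsLem}(\ref{binomSumItem})), climbs from $\binom{N}{a}$ to $\binom{N}{a+2}$ via two ratio estimates, and reduces to $\frac{1}{v-2}\leq\frac{N(Nv-N-v)}{(N+v)(N+2v)}$; since that reduction fails for small parameters, it must dispose of $a=1$ and of $v=3$, $N\in\{6,\dots,13\}$ by direct verification. You instead anchor the geometric series one level lower, at $\binom{N}{a-1}$ with ratio $r=(a-1)/(N-a+2)$, obtaining the sharper bound $\sum_{i=0}^{a-1}\binom{N}{i}<\binom{N}{a-1}\tfrac{N-a+2}{N-2a+3}$, replace $v-1$ by $(N-a)/a$, and land on the single cubic inequality $(b+2)(a+1)(a+2)\leq(b-1)(b+1)(b-a+3)$ for $b=N-a\geq 2a$, which you verify uniformly by monotonicity ($h(2a)=2a^3+4a^2-11a-7>0$ for $a\geq 2$, $h'>0$ on $[2a,\infty)$); I checked the algebra, including $h'(2a)=7a^2+9a-3$ and $h''(b)=6b+6-2a$, and it is right. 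What your approach buys is the elimination of all finite case-checking except the trivial $a=1$ case, and it correctly isolates why the naive bound is insufficient (the tight instance $v=3$, $N=6$, where your reduction holds with slack exactly $3$); what the paper's approach buys is a shorter write-up at the cost of eight unspelled-out numerical verifications. Both proofs rely on the same underlying tools (the ratio identity and a geometric series), so the difference is in the decomposition, not the toolkit.
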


\begin{proof}
If $\frac{N}{2} < v \leq N$, then $a=1$ and it is easy to confirm that the inequality holds. So we may assume that $3 \leq v \leq \frac{N}{2}$ and hence that $N \geq 6$. Furthermore, the inequality holds if $v=3$ and $N \in \{6,\ldots,13\}$, so we may assume that $N \geq 14$ if $v=3$.

By Lemma \ref{basicFactsLem}(\ref{binomSumItem}), $\sum_{i=0}^{a-1}\tbinom{N}{i} < \tfrac{1}{v-2}\tbinom{N}{a}$. By Lemma \ref{basicFactsLem}(\ref{fPlusOneItem}), $\tbinom{N}{a+1} \geq \frac{N(v-1)}{N+v}\tbinom{N}{a}$ and, by Lemma \ref{basicFactsLem}(\ref{binomRatioItem}), $\tbinom{N}{a+2} \geq \frac{Nv-N-v}{N+2v}\tbinom{N}{a+1}$ because $a+2 \leq \frac{N}{v}+2$. Thus it suffices to show that $\tfrac{v-1}{v-2} \leq (\frac{N(v-1)}{N+v})(\frac{Nv-N-v}{N+2v})$ or equivalently that $\tfrac{1}{v-2} \leq \frac{N(Nv-N-v)}{(N+v)(N+2v)}$. This holds when $v=3$ and $N \geq 14$, so we may assume that $v \geq 4$. Because $v \leq \frac{N}{2}$, $(N+v)(N+2v) \leq 3N^2$, so $\frac{N(Nv-N-v)}{(N+v)(N+2v)} \geq \frac{2v-3}{6}$ and it is clear that $\tfrac{1}{v-2} \leq \frac{2v-3}{6}$ for $v \geq 4$.
\end{proof}

\begin{lemma}\label{floorPlusOneUnsatIneqs}
Let $N$, $v$ be integers such that $4 \leq v < N$, and let $a = \lfloor\frac{N}{v}\rfloor$. Then
\[\tfrac{v-2}{2}\tbinom{N}{a} + (v-1)\medop\sum_{i=0}^{a-1}\tbinom{N}{i} < \tbinom{N}{a+1}.\]
\end{lemma}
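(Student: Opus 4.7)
The plan is to split the argument on the value of $a = \lfloor N/v\rfloor$. In the easy case $a = 1$ (so $v+1 \leq N \leq 2v-1$), the sum collapses to $1$ and the desired inequality reduces to $N(N-v+1) > 2(v-1)$; this is straightforward since $N \geq v+1$ forces $N-v+1 \geq 2$ and $N \geq 5$, giving $N(N-v+1) \geq 2N \geq 2(v+1) > 2(v-1)$. The remainder of the argument may therefore assume $a \geq 2$, so $N \geq 2v \geq 8$.

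For the main case $a \geq 2$, I would invoke Lemma~\ref{basicFactsLem}(\ref{binomSumItem}) to bound $\sum_{i=0}^{a-1}\tbinom{N}{i} < \tbinom{N}{a}/(v-2)$, which yields
\[\tfrac{v-2}{2}\tbinom{N}{a} + (v-1)\medop\sum_{i=0}^{a-1}\tbinom{N}{i} < \tfrac{v^2-2v+2}{2(v-2)}\tbinom{N}{a},\]
and combine this with the lower bound $\tbinom{N}{a+1} \geq \tfrac{N(v-1)}{N+v}\tbinom{N}{a}$ from Lemma~\ref{basicFactsLem}(\ref{fPlusOneItem}). It then suffices to verify
\[\tfrac{v^2-2v+2}{2(v-2)} \leq \tfrac{N(v-1)}{N+v},\]
which rearranges to $N(v^2-4v+2) \geq v(v^2-2v+2)$. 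Substituting $N=2v$ reduces this to $v^2-6v+2 \geq 0$, which holds precisely when $v \geq 6$; so for $v \geq 6$ the hypothesis $N \geq 2v$ settles the case immediately.

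The remaining obstacle is the small values $v \in \{4,5\}$, for which the threshold $N^\ast := v(v^2-2v+2)/(v^2-4v+2)$ exceeds $2v$ (it equals $20$ for $v=4$ and $85/7$ for $v=5$). Here a short finite check is required: for $v=4$ the values $N \in \{8,9,\dots,19\}$, and for $v=5$ the values $N \in \{10,11,12\}$. Each of these can be dispatched by direct numerical evaluation of both sides. I expect the main obstacle to be the tightness of the extreme case $(v,N)=(4,8)$ with $a=2$, where the two sides equal $55$ and $56$ respectively; because of this tightness, even a refinement of the bound on $\sum_{i=0}^{a-1}\tbinom{N}{i}$ obtained via the exact ratio $\tbinom{N}{a-1}/\tbinom{N}{a} = a/(N-a+1)$ does not quite eliminate the need for the case-by-case verification.
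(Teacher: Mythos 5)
Your proof is correct and follows essentially the same route as the paper's: both use Lemma~\ref{basicFactsLem}(\ref{binomSumItem}) and Lemma~\ref{basicFactsLem}(\ref{fPlusOneItem}) to reduce the claim to a rational inequality in $N$ and $v$, and both fall back on direct verification for small parameters. The only difference is bookkeeping: the paper first disposes of $a\in\{1,2\}$ as ``routine'' and then works under $v\le N/3$ (checking $v=4$, $N\in\{12,\dots,19\}$ separately), whereas you apply the main bound already for $a\ge 2$ and absorb the resulting failures at $v\in\{4,5\}$ into your finite checks --- including the genuinely tight case $(v,N)=(4,8)$ with margin $55<56$, on which the paper's ``routine'' $a=2$ verification also silently relies.
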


\begin{proof}
If $\frac{N}{2} < v \leq N-1$, then $a=1$ and it is routine to confirm that the inequality holds, so we may assume that $4 \leq v \leq \frac{N}{2}$ and hence that $N \geq 8$.
Similarly, if  $\frac{N}{3} < v \leq \frac{N}{2}$, then $a=2$ and it is routine to confirm that the inequality holds for $N \geq 8$, so we may assume that $4 \leq v \leq \frac{N}{3}$ and hence that $N \geq 12$.
Furthermore, the inequality holds if $v=4$ and $N \in \{12,\ldots,19\}$, so we may assume that $N \geq 20$ if $v=4$.

By Lemma \ref{basicFactsLem}(\ref{binomSumItem}), $\sum_{i=0}^{a-1}\tbinom{N}{i} < \tfrac{1}{v-2}\tbinom{N}{a}$ and, by Lemma \ref{basicFactsLem}(\ref{fPlusOneItem}), $\tbinom{N}{a+1} \geq \frac{N(v-1)}{N+v}\tbinom{N}{a}$. Thus it suffices to show that $\tfrac{v-2}{2} + \frac{v-1}{v-2} \leq \frac{N(v-1)}{N+v}$. This holds when $v=4$ and $N \geq 20$, so we may assume that $v \geq 5$. Because $v \leq \frac{N}{3}$, $\frac{N(v-1)}{N+v} \geq \frac{3(v-1)}{4}$ and it is clear that $\tfrac{v-2}{2} + \frac{v-1}{v-2} \leq \frac{3(v-1)}{4}$ for all $v \geq 5$.
\end{proof}

\begin{lemma}\label{threePartsLemma}
Let $N$ be an integer such that $N \geq 4$, and let $a = \lfloor\frac{N}{v}\rfloor$.
\begin{enumerate}
    \item
If $N \equiv 0 \mod{3}$, then $\tbinom{N}{a-1}+2\tbinom{N}{a-2} + \tbinom{N}{a-3} < \tbinom{N}{a+1}.$
    \item
If $N \equiv 1 \mod{3}$, then $\tfrac{1}{2}\tbinom{N}{a} + 2\tbinom{N}{a-1} + \tbinom{N}{a-2} < \tbinom{N}{a+1}.$
\end{enumerate}
\end{lemma}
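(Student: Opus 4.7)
The plan is to divide both inequalities through by $\binom{N}{a+1}$ and exploit the fact that the congruence hypotheses, together with $a=\lfloor N/v \rfloor$, force the implicit value $v=3$: in part (1) we then have $N=3a$, and in part (2) we have $N=3a+1$. Repeated application of the identity $\binom{N}{k-1}=\frac{k}{N-k+1}\binom{N}{k}$ from Lemma~\ref{basicFactsLem}(\ref{binomRatioItem}) expresses each ratio $\binom{N}{a-j}/\binom{N}{a+1}$ that appears as an explicit rational function of $a$ with small linear factors in the numerator and denominator.

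Substituting these ratios and multiplying through by a common denominator (a product of the factors $2a+1$ and $2a+3$, possibly with a factor of $2$ or $4$) converts each inequality into a polynomial inequality in $a$. In both parts the left and right sides turn out to be quadratic in $a$, and a routine subtraction yields a quadratic with positive leading coefficient that is manifestly nonnegative for every relevant $a$. Some care should be taken in part (1) to combine the first two terms before multiplying out, since $\tfrac{a+1}{2(2a+1)}+\tfrac{a-1}{2(2a+1)}$ collapses pleasantly to $\tfrac{a}{2a+1}$ and avoids an unnecessarily high-degree calculation.

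The only delicate point is the behaviour of small cases, where indices such as $a-2$ or $a-3$ may be negative and the corresponding binomial coefficients vanish. Since the hypothesis $N\geq 4$ bounds $a$ from below (by $2$ in part (1) and by $1$ in part (2)), at most a handful of edge cases remain to be checked directly, and these are routine numerical verifications. No substantial obstacle arises: the lemma is a bookkeeping inequality needed as input to the main construction, and once the binomial ratios are in hand the proof is a short calculation.
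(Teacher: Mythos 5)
Your proposal is correct and takes essentially the same route as the paper: both reduce the inequality to an elementary rational-function verification by normalizing against a single binomial coefficient and applying the ratio identity of Lemma~\ref{basicFactsLem}(\ref{binomRatioItem}) (the paper bounds $\binom{N}{a-i}$ by powers of the one-step ratio via Lemma~\ref{basicFactsLem}(\ref{binomRatioCorItem}) instead of computing the ratios exactly, but this is an immaterial difference). Your attention to the degenerate small-$a$ cases where binomial coefficients with negative lower index vanish is sensible, though in fact the exact ratio formulas already return zero there.
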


\begin{proof}\mbox{}
\begin{enumerate}
    \item
Because $a=\frac{N}{3}$, by Lemma \ref{basicFactsLem}(\ref{binomRatioItem}), $\tbinom{N}{a+1} = \tfrac{2N}{N+3}\tbinom{N}{a}$, and by Lemma \ref{basicFactsLem}(\ref{binomRatioCorItem}), $\tbinom{N}{a-i} \leq (\tfrac{N}{2N+3})^i\tbinom{N}{a}$ for $i \in \{1,2,3\}$. So
$\tbinom{N}{a+1}-\left(\tbinom{N}{a-1}+2\tbinom{N}{a-2} + \tbinom{N}{a-3}\right) \geq \tbinom{N}{a}\left(\tfrac{2N}{N+3}-\tfrac{N}{2N+3}+2(\tfrac{N}{2N+3})^2+(\tfrac{N}{2N+3})^3\right)>0,$
where the final inequality is routine to verify.
    \item
Now $a=\frac{N-1}{3}$. We have from Lemma \ref{basicFactsLem}(\ref{binomRatioItem}) that $\tbinom{N}{a+1} = \tfrac{2N+1}{N+2}\tbinom{N}{a}$ and from Lemma \ref{basicFactsLem}(\ref{binomRatioCorItem}), $\tbinom{N}{a-i} = (\tfrac{N-1}{2N+4})^i\tbinom{N}{a}$. So
$\tbinom{N}{a+1}-\left(\tfrac{1}{2}\tbinom{N}{a}+2\tbinom{N}{a-1} + \tbinom{N}{a-2}\right) \geq \tbinom{N}{a}\left(\tfrac{2N+1}{N+2}-\tfrac{1}{2}(\tfrac{N-1}{2N+4})+2(\tfrac{N-1}{2N+4})^2+(\tfrac{N-1}{2N+4})^3\right)>0,$
where the final inequality is routine to verify.
\end{enumerate}
\end{proof}

\section{Proof of the main result}\label{sec:main}

Throughout this and the next section we take $s=\sum_{i=f-d+2}^{f-1}(d-f-1+i)\tbinom{N}{i}$ and $s'=\sum_{i=f-v+1}^{f-2}(v-f+i)\tbinom{N}{i}$, as in Definition \ref{optFamDef}. We frequently make use of the fact that, if $N$ and $v$ are integers such that $3 \leq v < N$ and $f = \lfloor \frac{N+1}{v} \rfloor$, then  $f = \lfloor \frac{N}{v} \rfloor$ when $N \not\equiv v-1 \mod{v}$ and $f-1 = \lfloor \frac{N}{v} \rfloor$ when $N \equiv v-1 \mod{v}$. We first establish that the types $\mathcal{L}(N,v)$ are well-defined.

\begin{lemma}\label{wellDefLemma}
Let $N$ and $v$ be integers such that $3 \leq v < N$, let $f = \lfloor \frac{N+1}{v} \rfloor$, and let $d=(f+1)v-N$. Then
\begin{enumerate}
    \item
$\lfloor\frac{1}{d}(\tbinom{N}{f}-s)\rfloor \geq 0$ when $N \not\equiv v-1 \mod{v}$; and
    \item
$\binom{N}{f-1}-2\lceil\frac{s'}{v+1}\rceil \geq 0$ when $N \equiv v-1 \mod{v}$.
\end{enumerate}
\end{lemma}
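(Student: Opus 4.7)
The plan is to prove the strict inequalities $s < \binom{N}{f}$ for part 1 and $s' < \binom{N}{f-1}$ for part 2, from which both nonnegativity statements will follow. By Lemma~\ref{bigvLemma} we may restrict to the case $3 \leq v < N$, so that Lemma~\ref{basicFactsLem}(\ref{binomSumItem}) is available.

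For part 1, I would rewrite the summand of $s$ in the form $(i - (f-d+1))\binom{N}{i}$, so that the coefficient on $\binom{N}{i}$ is a positive integer no larger than $d-2 \leq v-2$ (adopting the convention $\binom{N}{i}=0$ for $i<0$ to handle the case $f-d+2 < 0$). This yields $s \leq (v-2)\sum_{i=0}^{f-1}\binom{N}{i}$. Since $N \not\equiv v-1 \pmod{v}$, we have $f = \lfloor N/v \rfloor$, and Lemma~\ref{basicFactsLem}(\ref{binomSumItem}) applied with $a=f$ gives $\sum_{i=0}^{f-1}\binom{N}{i} < \frac{1}{v-2}\binom{N}{f}$. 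Combining these bounds gives $s < \binom{N}{f}$, so $\binom{N}{f} - s \geq 1$ and hence $\lfloor(\binom{N}{f}-s)/d\rfloor \geq 0$.

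For part 2, a parallel rewriting $s' = \sum (i-(f-v))\binom{N}{i}$ shows each coefficient is a positive integer at most $v-2$, so $s' \leq (v-2)\sum_{i=0}^{f-2}\binom{N}{i}$. Here $N \equiv v-1 \pmod{v}$ gives $f-1 = \lfloor N/v \rfloor$, so Lemma~\ref{basicFactsLem}(\ref{binomSumItem}) with $a = f-1$ yields $s' < \binom{N}{f-1}$. Since both sides are integers, this forces $s' \leq \binom{N}{f-1}-1 \leq 2\lfloor \binom{N}{f-1}/2 \rfloor \leq (v+1)\lfloor\binom{N}{f-1}/2\rfloor$, where the penultimate inequality is immediate for either parity of $\binom{N}{f-1}$ and the last uses $v \geq 3$. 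Dividing by $v+1$ and taking the ceiling (which is permissible because $\lfloor\binom{N}{f-1}/2\rfloor$ is an integer) gives $\lceil s'/(v+1)\rceil \leq \lfloor\binom{N}{f-1}/2\rfloor$, and hence $2\lceil s'/(v+1)\rceil \leq \binom{N}{f-1}$, as required.

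No single step presents a serious obstacle; the main things to check carefully are that the coefficients $(d-f-1+i)$ and $(v-f+i)$ appearing in $s$ and $s'$ really do lie in $\{1,\dots,v-2\}$ on the effective index ranges, and that the appropriate value of $\lfloor N/v\rfloor$ is used as the parameter $a$ of Lemma~\ref{basicFactsLem}(\ref{binomSumItem}) in each congruence case. With these bookkeeping points settled, the argument reduces to a short chain of elementary manipulations.
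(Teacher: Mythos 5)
Your proposal is correct and follows essentially the same route as the paper: bound the coefficients $d-f-1+i$ and $v-f+i$ by $v-2$ on their effective ranges and invoke Lemma~\ref{basicFactsLem}(\ref{binomSumItem}) with $a=\lfloor N/v\rfloor$ to get $s<\binom{N}{f}$ and $s'<\binom{N}{f-1}$. The only divergence is the final integer-arithmetic step in part 2, where the paper bounds $\lceil s'/(v+1)\rceil\leq\lceil\tfrac{1}{4}\binom{N}{f-1}\rceil$ while you pass through $\lfloor\binom{N}{f-1}/2\rfloor$; both are valid and equally elementary.
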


\begin{proof}
Suppose that $N \not\equiv v-1 \mod{v}$.
Because $d-f-1+i \leq v-2$ for $f-d+2 \leq i \leq f-1$, by Lemma~\ref{basicFactsLem}(\ref{binomSumItem}) it follows that $s < \tbinom{N}{f}$.
Thus $\lfloor\frac{1}{d}(\tbinom{N}{f}-s)\rfloor \geq 0$.

Suppose $N \equiv v-1 \mod{v}$. Then $f \geq 2$ because $N>v$. Now $v-f+i \leq v-2$ for $f-v+1 \leq i \leq f-2$ and so it follows from Lemma~\ref{basicFactsLem}(\ref{binomSumItem}) that $s' < \tbinom{N}{f-1}$. Thus
\[\tbinom{N}{f-1}-2\left\lceil\tfrac{s'}{v+1}\right\rceil \geq \tbinom{N}{f-1}-2\left\lceil\tfrac{1}{v+1}\tbinom{N}{f-1}\right\rceil \geq \tbinom{N}{f-1}-2\left\lceil\tfrac{1}{4}\tbinom{N}{f-1}\right\rceil \geq 0. \]
\end{proof}

Next we show that the types $\mathcal{L}(N,v)$ are admissible.

\begin{lemma}\label{admissibleLemma}
Let $N$ and $v$ be integers such that $3 \leq v < N$. Then $\mathcal{L}(N,v)$ is an admissible $v$-type.
\end{lemma}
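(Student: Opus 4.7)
The plan is to verify admissibility by computing, for each $x \in \{0,1,\ldots,N\}$, the total multiplicity $A(x) = \sum_{M \in \mathcal{L}(N,v)}\mu_M(x)$ of $x$ across the shapes in $\mathcal{L}(N,v)$, and showing $A(x) \leq \tbinom{N}{x}$. The first step is to describe each shape explicitly. Writing $N - i = (v-1)q_i + r_i$ with $0 \leq r_i < v-1$, the shape $L_i(N,v)$ consists of one entry equal to $i$, $v-1-r_i$ entries equal to $q_i$, and $r_i$ entries equal to $q_i+1$. A short computation yields $q_f = f$ and $r_f = v-d$ when $N \not\equiv v-1 \pmod v$, while $q_{f-1} = f$ and $r_{f-1} = 0$ when $N \equiv v-1 \pmod v$. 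Since $q_i$ is weakly decreasing in $i$, $q_i \geq f$ throughout the relevant range. The shape $L_*$ comprises two entries of $f-1$, $v-3$ entries of $f$, and one entry of $f+1$.

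The proof then proceeds by cases on $x$. For $x \leq f-2$, the bound $q_i \geq f > x$ forbids any $L_i$ with $i \neq x$ from containing $x$, so $A(x) = |L_x| = \tbinom{N}{x}$. For $x = f-1$: when $N \not\equiv v-1 \pmod v$ the same argument applies; when $N \equiv v-1 \pmod v$ the two $(f-1)$'s per copy of $L_*$ are exactly offset by the reduction $2\lceil s'/(v+1)\rceil$ in $|L_{f-1}|$, again giving $A(f-1) = \tbinom{N}{f-1}$. For $x = f$, summing contributions from $L_{f-j}$ with $1 \leq j \leq d-2$ (or the analogous range involving $L_*$) and reindexing yields
\[A(f) = d|L_f| + s \quad\text{or}\quad A(f) = \tbinom{N}{f} + s' - (v+1)\lceil s'/(v+1)\rceil,\]
each at most $\tbinom{N}{f}$ by virtue of the floor in $|L_f|$ or the ceiling in $|L_*|$.

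For $x = f+1$ a careful count gives
\[A(f+1) \leq \tfrac{v-d}{d}\tbinom{N}{f} + (v-1)\medop\sum_{i=0}^{f-1}\tbinom{N}{i},\]
and since $(v-d)/d \leq (v-2)/2$, Lemma~\ref{floorPlusOneUnsatIneqs} (with $a = f$) yields $A(f+1) \leq \tbinom{N}{f+1}$ when $v \geq 4$. The case $v = 3$ is handled by a direct expansion of $A(f+1)$ together with Lemma~\ref{threePartsLemma} (for $N \equiv 0, 1 \pmod 3$) or the strict inequality of Lemma~\ref{easyUnsatIneq} combined with ceiling bookkeeping on $|L_*|$ (for $N \equiv 2 \pmod 3$). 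For $x \geq f+2$, only shapes $L_i$ with $i \leq f-1$ can contain $x$, each contributing at most $v-1$ entries equal to $x$, so $A(x) \leq (v-1)\sum_{i=0}^{f-1}\tbinom{N}{i} < \tbinom{N}{f+2} \leq \tbinom{N}{x}$ by Lemma~\ref{easyUnsatIneq} and unimodality of binomial coefficients; the remaining range $x > q_0 + 1$ is covered by the trivial observation $A(x) = 0$.

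The main obstacle is the case $x = f+1$, where contributions from many shapes must be summed precisely and matched against the binomial inequalities from Section~\ref{sec:bin}; the sub-case $v = 3$ requires Lemma~\ref{threePartsLemma} with separate treatment for each residue of $N \bmod 3$, and the sub-case $N \equiv v-1 \pmod v$ adds bookkeeping for the extra shape $L_*$ and the ceilings in its multiplicity.
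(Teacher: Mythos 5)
Your overall strategy -- computing the exact multiplicity $\sigma_{\mathcal{L}}(x)=\sum_{M}\mu_M(x)$ for each $x$ and checking it against $\binom{N}{x}$ case by case, with the cases $x<f$, $x=f$, $x=f+1$, $x>f+1$ handled via the exact defining floors/ceilings and Lemmas \ref{easyUnsatIneq}, \ref{floorPlusOneUnsatIneqs} and \ref{threePartsLemma} -- is the same as the paper's, and your treatments of $x\leq f$ and $x\geq f+2$ (and of $x=f+1$ when $N\not\equiv v-1\pmod v$) are correct.

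However, there is a genuine gap at $x=f+1$ when $v\geq 4$ and $N\equiv v-1\pmod{v}$. In that case $\lfloor N/v\rfloor=f-1$, not $f$, so Lemma \ref{floorPlusOneUnsatIneqs} cannot be invoked ``with $a=f$'': its hypothesis fails, and the inequality you would need, $\tfrac{v-2}{2}\tbinom{N}{f}+(v-1)\sum_{i=0}^{f-1}\tbinom{N}{i}<\tbinom{N}{f+1}$, is simply false there. For example, with $N=7$ and $v=4$ one has $f=2$ and the left side is $\tbinom{7}{2}+3\big(\tbinom{7}{0}+\tbinom{7}{1}\big)=45$, while $\tbinom{7}{3}=35$. (Your intermediate bound with the negative coefficient $\tfrac{v-d}{d}=-\tfrac{1}{v+1}$ happens to be harmless, but the step that replaces it by $\tfrac{v-2}{2}$ and then cites the lemma is where the argument breaks.) The repair is exactly the device you reserve for $v=3$, $N\equiv 2\pmod 3$: when $N\equiv v-1\pmod v$ there are no copies of $L_f$ at all, $L_{f-1}=\{f-1,f,\ldots,f\}$ contributes no entries equal to $f+1$, and each copy of $L_*$ contributes exactly one, so
\[\sigma_{\mathcal{L}}(f+1)=\medop\sum_{i=0}^{f-2}\tbinom{N}{i}\mu_{L_i}(f+1)+\big\lceil\tfrac{s'}{v+1}\big\rceil\leq\big\lceil(v-1)\medop\sum_{i=0}^{f-2}\tbinom{N}{i}\big\rceil\leq\tbinom{N}{f+1},\]
the last step being Lemma \ref{easyUnsatIneq} applied with $a=f-1=\lfloor N/v\rfloor$. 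This works for all $v\geq 3$, so you should route the entire $N\equiv v-1\pmod v$ subcase of $x=f+1$ through it rather than through Lemma \ref{floorPlusOneUnsatIneqs}. A similar (minor) caveat applies to your $x\geq f+2$ case: when $N\equiv v-1\pmod v$ the sum should run only to $i=f-2$ so that Lemma \ref{easyUnsatIneq} applies with $a=f-1$; as it happens the correct count excludes $L_{f-1}$ anyway, so this is only a matter of stating the right range.
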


\begin{proof}
Throughout this proof we abbreviate $L_i(N,v)$ to $L_i$ and $\mathcal{L}(N,v)$ to $\mathcal{L}$.
Let $f = \lfloor \frac{N+1}{v} \rfloor$ and  $d=(f+1)v-N$. For a shape $L$, recall that $\mu_L(x)$ denotes the number of entries of $L$ equal to $x$.
Let $\sigma_{\mathcal{L}}(x)=\sum_{L \in \mathcal{L}}\mu_{L}(x)$.
We must show that $\sigma_{\mathcal{L}}(x) \leq \binom{N}{x}$ for $0 \leq x \leq N$.  We consider several cases depending on $N$ and $x$.

\begin{description}
\item [$\boldsymbol{N \not\equiv v-1 \mod{v}}$:] Treat subcases as follows.
\begin{description}
    \item[$\boldsymbol{\lceil \frac{N}{2} \rceil < x \leq N}$:]
Each entry of $L_i$ for $0 \leq i \leq f$ is at most $\lceil\frac{N}{2}\rceil$, so $\sigma_{\mathcal{L}}(x) = 0$.
    \item[$\boldsymbol{f+1 < x \leq \lceil \frac{N}{2}\rceil}$:]
Then $\mu_{L_i}(x)\leq v-1$ for $0 \leq i \leq f-1$ and $\mu_{L_f}(x)=0$. By the definition of $\mathcal{L}$, the fact that $\binom{N}{f+2} \leq \binom{N}{x}$,  and  Lemma \ref{easyUnsatIneq}, $\sigma_{\mathcal{L}}(x) \leq \binom{N}{x}$.
    \item [$\boldsymbol{x=f+1}$, $\boldsymbol{v \geq 4}$:]
Then $\mu_{L_i}(f+1) \leq v-1$ for $0 \leq i \leq f-1$.
Further, $\mu_{L_f}(f+1) \leq v-2$ and there are at most $\frac{1}{2}\binom{N}{f}$ copies of $L_f$ in $\mathcal{L}$ (note that $d \geq 2$).
By the definition of $\mathcal{L}$ and  Lemma \ref{floorPlusOneUnsatIneqs}, $\sigma_{\mathcal{L}}(f+1)\leq\binom{N}{f+1}$.
    \item [$\boldsymbol{x=f+1}$, $\boldsymbol{v=3}$, $\boldsymbol{N \equiv 0 \mod{3}}$:]
Then $d=3$, $\mu_{L_i}(f+1) = 0$ for $0 \leq i \leq f-4$, $\mu_{L_f}(f+1) = 0$, $\mu_{L_i}(f+1) = 1$ for $i \in \{f-3,f-1\}$, and $\mu_{L_{f-2}}(f+1)=2$.
By the definition of $\mathcal{L}$ and  Lemma \ref{threePartsLemma}(1), $\sigma_{\mathcal{L}}(f+1)\leq\binom{N}{f+1}$.
    \item [$\boldsymbol{x=f+1}$, $\boldsymbol{v=3}$, $\boldsymbol{N \equiv 1 \mod{3}}$:]
Then $d=2$, $\mu_{L_i}(f+1) = 0$ for $0 \leq i \leq f-3$, $\mu_{L_i}(f+1) = 1$ for $i \in \{f-2,f\}$, and $\mu_{L_{f-1}}(f+1)=2$. By the definition of $\mathcal{L}$ and  Lemma \ref{threePartsLemma}(2), $\sigma_{\mathcal{L}}(f+1)\leq\binom{N}{f+1}$.
    \item [$\boldsymbol{x=f}$:]
Then $d \geq 2$, $\mu_{L_i}(f)=0$ for $0 \leq i \leq f-d+1$, $\mu_{L_i}(f)=d-f-1+i$ for $f-d+2 \leq i \leq f-1$, and $\mu_{L_f}(f)=d$. By the definitions of $\mathcal{L}$ and $s$,  $\sigma_{\mathcal{L}}(f)=s+d\lfloor\frac{1}{d}(\tbinom{N}{f}-s)\rfloor \leq \tbinom{N}{f}$.
    \item[$\boldsymbol{0 \leq x < f}$:]
Then $\mu_{L_i}(x) = 1$ if $i = x$ and $\mu_{L_i}(x) =0$ if $i \neq x$.
So $\sigma_{\mathcal{L}}(x)= \binom{N}{x}$.
\end{description}
\item [$\boldsymbol{N \equiv v-1 \mod{v}}$:] Treat subcases as follows.
\begin{description}
    \item[$\boldsymbol{\lceil \frac{N}{2} \rceil < x \leq N}$:]
Each entry of $L_i$ for $0 \leq i \leq f-1$ and each entry of $L_*$ is at most $\lceil\frac{N}{2}\rceil$, so $\sigma_{\mathcal{L}}(x) = 0$.
    \item[$\boldsymbol{f+1 < x \leq \lceil \frac{N}{2}\rceil}$:]
Then $\mu_{L_i}(x)\leq v-1$ for $0 \leq i \leq f-2$, $\mu_{L_{f-1}}(x)=0$ and $\mu_{L_*}(x)=0$. By the definition of $\mathcal{L}$, the fact that $\binom{N}{f+1} < \binom{N}{x}$,  and  Lemma \ref{easyUnsatIneq}, $\sigma_{\mathcal{L}}(x) \leq \binom{N}{x}$.
    \item [$\boldsymbol{x=f+1}$:]
Then $\mu_{L_{f-1}}(f+1)=0$, $\mu_{L_*}(f+1)=1$ and there are $\lceil\frac{s'}{v+1}\rceil$ copies of $L_*$ in $\mathcal{L}$.
Using the definitions of $\mathcal{L}$ and $s'$,
\[ \begin{array}{rcl} \sigma_{\mathcal{L}}(f+1) & = & \medop\sum_{i=0}^{f-2}\tbinom{N}{i}\mu_{L_i}(f+1)+\left\lceil\medop\sum_{i=f-v+1}^{f-2}\tfrac{v-f+i}{v+1}\tbinom{N}{i}\right\rceil \\
& \leq & \left\lceil(v-1)\medop\sum_{i=0}^{f-2}\tbinom{N}{i}\right\rceil \leq \tbinom{N}{f+1}.
\end{array}\]
The first inequality follows from $\mu_{L_i}(f+1)\leq v-2$ for $i \in \{0,\ldots,f-2\} \setminus \{f-v\}$, $\mu_{L_{f-v}}(f+1)=v-1$, and $\frac{v-f+i}{v+1} < 1$ for $f-v+1 \leq i \leq f-2$.
The second inequality follows from Lemma \ref{easyUnsatIneq}.
    \item [$\boldsymbol{x=f}$:]
Then $\mu_{L_i}(f)=0$ for $0 \leq i \leq f-v$, $\mu_{L_i}(f)=v-f+i$ for $f-v+1 \leq i \leq f-1$, and $\mu_{L_*}(f)=v-3$.
Thus, using the definitions of $\mathcal{L}$ and $s'$,
\[
\begin{array}{rcl} \sigma_{\mathcal{L}}(f)&=&s'+(v-1)\left(\tbinom{N}{f-1}-2\left\lceil\tfrac{s'}{v+1}\right\rceil\right)+(v-3)\left\lceil\tfrac{s'}{v+1}\right\rceil\\[2mm]
&=& (v-1)\tbinom{N}{f-1}+s'-(v+1)\left\lceil\tfrac{s'}{v+1}\right\rceil\\[2mm]
&=& \binom{N}{f}+s'-(v+1)\left\lceil\tfrac{s'}{v+1}\right\rceil.
\end{array} \]
The final equality arises from Lemma \ref{basicFactsLem}(\ref{binomRatioItem}) because $f=\frac{N+1}{v}$.
So $\sigma_{\mathcal{L}}(f) \leq \binom{N}{f}$.
    \item [$\boldsymbol{x=f-1}$:]
Then $\mu_{L_i}(f)=0$ for $0 \leq i \leq f-2$, $\mu_{L_{f-1}}(f-1)=1$, and $\mu_{L_*}(f-1)=2$.
By the definitions of $\mathcal{L}$ and $s'$, $\sigma_{\mathcal{L}}(f-1)=\tbinom{N}{f-1}-2\lceil\frac{s'}{v+1}\rceil+2\lceil\frac{s'}{v+1}\rceil=\tbinom{N}{f-1}$.
    \item[$\boldsymbol{0 \leq x < f-1}$:]
Then $\mu_{L_i}(x) = 1$ if $i = x$, $\mu_{L_i}(x) =0$ if $i \neq x$, and $\mu_{L_*}(x)=0$.
So $\sigma_{\mathcal{L}}(x)= \binom{N}{x}$ using the definition of $\mathcal{L}$.
\end{description}
\end{description}
\end{proof}

We can now prove our main result.

\begin{proof}[of Theorem \ref{mainThm}]
By Proposition \ref{typeequiv}(1), the existence of an $(1,1)$-LA$(N;k,v)$ is equivalent to the existence of a realizable $v$-type on $\{1,\ldots,N\}$ consisting of $k$ shapes, and by Theorem \ref{genbar} this is equivalent to the existence of an admissible $v$-type on $\{1,\ldots,N\}$ consisting of $k$ shapes.
Thus ${\rm LAK}_{(1,1)}(N,v)$ is equal to the maximum number of shapes in an admissible $v$-type on $\{1,\ldots,N\}$.
By Lemma \ref{upperBoundsLemma}, no such $v$-type contains more than $\Lambda(N,v)$ shapes.
By Lemmas \ref{meetBoundLemma}, \ref{bigvLemma}, \ref{wellDefLemma} and \ref{admissibleLemma}, $\mathcal{L}(N,v)$ is an admissible $v$-type consisting of $\Lambda(N,v)$ shapes.
\end{proof}

\section{\texorpdfstring{$\boldsymbol{(\bar{1},1)}$}{}-, \texorpdfstring{$\boldsymbol{(1,\bar{1})}$}{}-, and \texorpdfstring{$\boldsymbol{(\bar{1},\bar{1})}$}{}-locating arrays}\label{sec:variant}

For $N$ rows and $v$ symbols,
denote by ${\rm LAK}_{(\bar{1},1)}(N,v)$ the largest number of columns in a  $(\bar{1},1)$-locating array;
by ${\rm LAK}_{(1,\bar{1})}(N,v)$ the largest number of columns in a  $(1,\bar{1})$-locating array; and
by ${\rm LAK}_{(\bar{1},\bar{1})}(N,v)$ the largest number of columns in a  $(\bar{1},\bar{1})$-locating array.
Fortunately these variants on (1,1)-locating arrays can be treated in the same framework.
First we treat the $(\bar{1},1)$ variant.
Observe that if $v \geq N+1$, then a $(\bar{1},1)$-LA$(N;k,v)$ cannot exist for $k > 0$ because no partition of $\{1,\ldots,N\}$ into $v$ distinct nonempty classes exists.

\begin{theorem}\label{1barThm}
Let $N$ and $v$ be integers such that $2 \leq v \leq N$, let $f = \lfloor \frac{N+1}{v} \rfloor$ and $d=(f+1)v-N$. Then
\begin{itemize}
    \item
${\rm LAK}_{(\bar{1},1)}(N,v)= \Lambda(N,v)$ if $d \geq f+2$ and $\sum_{i=0}^{f}(f+1-i)\binom{N}{i} \equiv x \mod{d}$ for some $x \in \{f+1,\ldots,d-1\}$; and
    \item
${\rm LAK}_{(\bar{1},1)}(N,v)= \Lambda(N,v)-1$ otherwise.
\end{itemize}
\end{theorem}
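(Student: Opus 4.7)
The plan is to reduce via Proposition~\ref{typeequiv}(2) and Theorem~\ref{genbar} to determining the maximum number of shapes in an admissible $v$-type on $\{1,\ldots,N\}$ in which no shape contains a $0$. Both halves of the proof then parallel Lemma~\ref{upperBoundsLemma} and the construction $\mathcal{L}(N,v)$ from Section~\ref{sec:types}, sharpened using the fact that $\sum_M \mu_M(0)=0$.

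For the upper bound, let $\mathcal{M}$ be such a type. When $d \leq f+1$, I split $\mathcal{M}=\mathcal{M}' \cup \mathcal{M}''$ exactly as in Lemma~\ref{upperBoundsLemma}; since no shape has smallest entry $0$, the bound on $|\mathcal{M}'|$ drops the $i=0$ term to $\sum_{i=1}^{f-d+1}\binom{N}{i}$, and combined with the unchanged bound on $|\mathcal{M}''|$ this yields $|\mathcal{M}| \leq \Lambda(N,v)-1$. When $d \geq f+2$, every shape lies in $\mathcal{M}''$, and because $\mu_M(0)=0$ for every $M$, the defect sum improves to $\sum_M \delta(M) \leq T-(f+1)$, where $T=\sum_{i=0}^{f}(f+1-i)\binom{N}{i}$. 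Hence $|\mathcal{M}| \leq \lfloor (T-(f+1))/d \rfloor$, which equals $\Lambda(N,v)$ precisely when $T \bmod d \in \{f+1,\ldots,d-1\}$ and equals $\Lambda(N,v)-1$ otherwise.

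For the construction I start from $\mathcal{L}(N,v)$, which contains exactly one shape with a $0$, namely $L_0$. In every case outside of Subcase 1a, I simply delete this copy of $L_0$ to obtain an admissible zero-free $v$-type of size $\Lambda(N,v)-1$. In Subcase 1a I perform a size-preserving swap: if $N \not\equiv v-1 \pmod{v}$ I replace $L_0$ by one extra copy of $L_f$, and if $N \equiv v-1 \pmod{v}$ (so $d=v+1$) I replace $L_0$ together with one copy of $L_*$ by two copies of $L_{f-1}$. A short multiset calculation shows that in both sub-subcases the net change in $\sigma(x) = \sum_M \mu_M(x)$ is identical, namely $\sigma(0)$ decreases by $1$, $\sigma(f)$ increases by $f+1$, $\sigma(f+1)$ decreases by $f$, and all other $\sigma(x)$ are unchanged.

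The main obstacle is then verifying admissibility of the swapped type, which reduces to showing that the slack $\binom{N}{f}-\sigma_{\mathcal{L}}(f)$ in $\mathcal{L}(N,v)$ is at least $f+1$. Two modular identities handle this. When $N \not\equiv v-1 \pmod{v}$, direct rearrangement gives $T - d\sum_{i=0}^{f-1}\binom{N}{i} = \binom{N}{f}-s$, so $T \bmod d = (\binom{N}{f}-s) \bmod d$, which the proof of Lemma~\ref{admissibleLemma} identifies as exactly the slack at entry $f$. When $N \equiv v-1 \pmod{v}$, pairing terms and using $\binom{N}{f}=(v-1)\binom{N}{f-1}$ (valid since $f=(N+1)/v$) yields $T+s' = (v+1)\sum_{i=0}^{f-1}\binom{N}{i}$, so $T \equiv -s' \pmod{v+1}$ and the slack at entry $f$ is again $T \bmod (v+1)$. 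Under the Subcase 1a hypothesis these slacks are at least $f+1$ as required, and the second swap is well-defined because $s' \geq (v-2)\binom{N}{f-2} \geq 1$ whenever $f \geq 2$, which is guaranteed here by $v \leq N$ and $N \equiv v-1 \pmod{v}$, so $\mathcal{L}(N,v)$ does contain a copy of $L_*$ to remove.
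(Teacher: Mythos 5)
Your proposal is correct and follows essentially the same route as the paper: the same reduction to $(\bar 1,1)$-admissible types, the same sharpened defect-count upper bound (which the paper leaves implicit as ``a very similar argument'' to Lemma~\ref{upperBoundsLemma}), and the identical constructions (deleting $L_0$, or in the exceptional subcase swapping $L_0$ for an extra $L_f$, respectively $\{L_0,L_*\}$ for two copies of $L_{f-1}$), with the same key computation that the slack $\binom{N}{f}-\sigma_{\mathcal{L}}(f)$ equals the residue $x$. The only additions are cosmetic: you spell out the congruence identities and verify that a copy of $L_*$ is available to remove, both of which check out in the relevant subcase where $v\geq f+1$.
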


\begin{proof}
Throughout this proof we abbreviate $L_i(N,v)$ to $L_i$ and $\mathcal{L}(N,v)$ to $\mathcal{L}$.
We call a $v$-type $(\bar{1},1)$-admissible if it is admissible and no shape in it contains a 0.
By Proposition \ref{typeequiv}(2), the existence of an ${\rm LAK}_{(\bar{1},1)}(N,v)$ is equivalent to the existence of a realizable $v$-type on $\{1,\ldots,N\}$ consisting of $k$ shapes such that no shape contains a 0.
By Theorem \ref{genbar} this is equivalent to the existence of a $(\bar{1},1)$-admissible $v$-type on $\{1,\ldots,N\}$ consisting of $k$ shapes.
Thus ${\rm LAK}_{(\bar{1},1)}(N,v)$ is equal to the maximum number of shapes in an $(\bar{1},1)$-admissible $v$-type on $\{1,\ldots,N\}$.

\begin{description}
    \item[$\boldsymbol{d \leq f+1}$:]
A very similar argument to that used in the proof of Lemma \ref{upperBoundsLemma} establishes that
\[{\rm LAK}_{(\bar{1},1)}(N,v) \leq \big\lfloor\tfrac{1}{d}\textstyle{\sum_{i=f-d+2}^{f}(f+1-i)}\tbinom{N}{i}\big\rfloor+\textstyle{\sum_{i=1}^{f-d+1}}\tbinom{N}{i} =\Lambda(N,v)-1.\]
Furthermore, we can obtain a $(\bar{1},1)$-admissible $v$-type on $\{1,\ldots,N\}$ with $\Lambda(N,v)-1$ shapes by removing the shape $L_0$ from $\mathcal{L}(N,v)$.
    \item[$\boldsymbol{d \geq f+2}$:]
Let $x \in \{0,\ldots,d-1\}$ satisfy $\sum_{i=0}^{f}(f+1-i)\binom{N}{i} \equiv x \mod{d}$.
Now
\[
\begin{array}{rcl} \big\lfloor\tfrac{1}{d}\textstyle{\sum_{i=0}^{f}(f+1-i)}\tbinom{N}{i}\big\rfloor& = & \Lambda(N,v), \\
\big\lfloor\tfrac{1}{d}\textstyle{\sum_{i=1}^{f}(f+1-i)}\tbinom{N}{i}\big\rfloor &=&
\left\{
  \begin{array}{ll}
    \Lambda(N,v)-1, & \hbox{if $x \in \{0,\ldots,f\}$;} \\
    \Lambda(N,v), & \hbox{if $x \in \{f+1,\ldots,d-1\}$.}
  \end{array}
\right.
\end{array}\]
A very similar argument to that used in the proof of Lemma \ref{upperBoundsLemma} establishes that
${\rm LAK}_{(\bar{1},1)}(N,v) \leq \big\lfloor\tfrac{1}{d}\textstyle{\sum_{i=1}^{f}(f+1-i)}\tbinom{N}{i}\big\rfloor.$
\begin{description}
    \item[$\boldsymbol{x \in \{0,\ldots,f\}}$:]
Again, we obtain a $(\bar{1},1)$-admissible $v$-type on $\{1,\ldots,N\}$ with $\Lambda(N,v)-1$ shapes by removing the shape $L_0$ from $\mathcal{L}(N,v)$.
    \item[$\boldsymbol{x \in \{f+1,\ldots,d-1\}}$, $\boldsymbol{N \not\equiv v-1 \mod{v}}$:]
We claim that the $v$-type $\mathcal{L}'$ obtained from $\mathcal{L}$ by removing the shape $L_0$ and adding one more shape $L_f$ is $(\bar{1},1)$-admissible.
The shapes $L_0$ and $L_f$ are as follows.
\begin{center}
\begin{tabular}{c||c|c|c}
  $x$ & $0$ & $f$ & $f+1$ \\ \hline
  $\mu_{L_0}(x)$ & $1$ & $d-f-1$ & $v-d+f$ \\
  $\mu_{L_f}(x)$ & $0$ & $d$ & $v-d$
\end{tabular}
\end{center}
Thus, $\sigma_{\mathcal{L}'}(0)=0$, $\sigma_{\mathcal{L}'}(f)=\sigma_{\mathcal{L}}(f)+f+1$ and $\sigma_{\mathcal{L}'}(x) \leq \sigma_{\mathcal{L}}(x)$ for each $x \neq f$.
To show that $\mathcal{L}'$ is $(\bar{1},1)$-admissible, we  show that $\sigma_{\mathcal{L}}(f)+f+1 \leq \binom{N}{f}$.
As in the proof of Lemma \ref{admissibleLemma}, $\sigma_{\mathcal{L}}(f) = s+d\lfloor\tfrac{1}{d}(\tbinom{N}{f}-s)\rfloor$.
Using the definition of $s$,
\[\tbinom{N}{f}-s \equiv \medop\sum_{i=0}^{f}(f+1-i)\tbinom{N}{i} \equiv x \mod{d},\]
and so $\sigma_{\mathcal{L}}(f) = \tbinom{N}{f}-x$. Thus
$\sigma_{\mathcal{L}}(f)+f+1 \leq \tbinom{N}{f}$.
    \item[$\boldsymbol{x \in \{f+1,\ldots,d-1\}}$, $\boldsymbol{N \equiv v-1 \mod{v}}$:]
Then $d=v+1$, so $v \geq f+1$ and $x \in \{f+1,\ldots,v\}$.
Also, $f \geq 2$ because $N \geq 2v-1$.
We claim that the $v$-type $\mathcal{L}'$ obtained from $\mathcal{L}$ by removing the shape $L_0$ and one shape $L_*$ and adding two more shapes $L_{f-1}$ is $(\bar{1},1)$-admissible.
The shapes $L_0$ and $L_*$ and $L_{f-1}$ are as follows.
\begin{center}
\begin{tabular}{c||c|c|c|c}
  $x$ & $0$ & $f-1$ & $f$ & $f+1$ \\ \hline
  $\mu_{L_0}(x)$ & $1$ & $0$ & $v-f$ & $f-1$ \\
  $\mu_{L_*}(x)$ & $0$ & $2$ & $v-3$ & $1$ \\
  $\mu_{L_{f-1}}(x)$ & $0$ & $1$ & $v-1$ & $0$
\end{tabular}
\end{center}
Thus, $\sigma_{\mathcal{L}'}(0)=0$, $\sigma_{\mathcal{L}'}(f)=\sigma_{\mathcal{L}}(f)+f+1$ and $\sigma_{\mathcal{L}'}(x) \leq \sigma_{\mathcal{L}}(x)$ for each $x \neq f$.
To show that $\mathcal{L}'$ is $(\bar{1},1)$-admissible, we  show that $\sigma_{\mathcal{L}}(f)+f+1 \leq \binom{N}{f}$.
As in the proof of Lemma \ref{admissibleLemma}, $\sigma_{\mathcal{L}}(f) = \binom{N}{f}+ s'-(v+1)\lceil\tfrac{s'}{v+1}\rceil$. Using the definition of $s'$,
\[
\begin{array}{rcll}
  s' &\equiv& \textstyle{\sum_{i=0}^{f-2}(v-f+i)\tbinom{N}{i}} & \mod{v+1} \\[1mm]
  &\equiv& \textstyle{\sum_{i=0}^{f}(v-f+i)\tbinom{N}{i}} & \mod{v+1} \\[1mm]
  &\equiv& \textstyle{-\sum_{i=0}^{f}(f+1-i)\tbinom{N}{i}} & \mod{v+1} \\[1mm]
  &\equiv& -x & \mod{v+1}
\end{array}
\]
where the second congruence follows because $\binom{N}{f}=(v-1)\binom{N}{f-1}$ by Lemma \ref{basicFactsLem}(\ref{binomRatioItem}) (noting that $f=\frac{N+1}{v}$) and hence  $v\binom{N}{f}+(v-1)\binom{N}{f-1} \equiv 0 \mod {v+1}$. It follows that $\sigma_{\mathcal{L}}(f) = \tbinom{N}{f}-x$. Thus
$\sigma_{\mathcal{L}}(f)+f+1 \leq \tbinom{N}{f}$.
\end{description}
\end{description}
\end{proof}

To treat  $(1,\bar{1})$ and $(\bar{1},\bar{1})$ variants, we also consider the unique 0-way interaction  $\sqcup$,  the interaction containing no columns at all.
For every $N \times k$ array $A$, $\rho_A(\sqcup) = \{1, \dots, N\}$, the set of all rows.
(By comparison, the set $\emptyset$ containing no interactions has $\rho_A(\emptyset) = \emptyset$.)

\begin{theorem}\label{barvar}
Let $N$ and $v$ be integers such that $2 \leq v \leq N+1$. Then
\begin{enumerate}
    \item
${\rm LAK}_{(1,\bar{1})}(N,v) = \left \{
\begin{array}{lcl}
{\rm LAK}_{(1,1)}(N,v) & \mbox{if} & v \geq 3\\
2^{N-1}-1 & \mbox{if} & v =2
\end{array} \right .$
    \item
${\rm LAK}_{(\bar{1},\bar{1})}(N,v) = {\rm LAK}_{(\bar{1},1)}(N,v)$ when $v \leq N$.
\end{enumerate}
\end{theorem}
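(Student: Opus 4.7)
The plan is to decode exactly what the ``$\bar{t}$'' in the test-strength slot adds over ``$t=1$'' in the definitions of Table \ref{defn:faultarray}, and then observe that the resulting extra condition is almost always automatic. Taking $d=1$ and $t=1$, the sets $\mathcal{T}_1,\mathcal{T}_2$ in the $(1,\bar{t})$- and $(\bar{1},\bar{t})$-LA definitions are singletons (or empty) of interactions of strength at most $1$, and independence is automatic for such collections. The only strength-at-most-$1$ interactions not already accounted for in the strength-$1$ case are the empty collection $\emptyset$ with $\rho_A(\emptyset)=\emptyset$ and the $0$-way interaction $\sqcup$ with $\rho_A(\{\sqcup\})=\{1,\ldots,N\}$. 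Unpacking the $\Leftrightarrow$, a $(1,\bar{1})$-LA is exactly a $(1,1)$-LA in which additionally no column-class equals $\{1,\ldots,N\}$; a $(\bar{1},\bar{1})$-LA is exactly a $(\bar{1},1)$-LA in which additionally no column-class equals $\{1,\ldots,N\}$ (the empty-class/no-interaction ambiguity is already ruled out once we require no class to be empty).

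For Part~(1) when $v \geq 3$, I would argue that this extra condition is free. By Proposition~\ref{turntosets}(3), the $v$ classes inside a single partition of a $(1,1)$-LA are pairwise distinct, so at most one of them is empty. A class equal to $\{1,\ldots,N\}$ would force the other $v-1 \geq 2$ classes to be empty, contradicting distinctness. Hence ${\rm LAK}_{(1,\bar{1})}(N,v)={\rm LAK}_{(1,1)}(N,v)$.

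For Part~(1) when $v=2$, I would enumerate directly: two-class partitions of $X=\{1,\ldots,N\}$ correspond bijectively to unordered pairs $\{A,X\setminus A\}$, giving $2^{N-1}$ such partitions. Any two distinct such partitions share no class (since $A$ already determines $\{A, X \setminus A\}$), so the distinctness condition of Proposition~\ref{turntosets}(3) is automatically satisfied and ${\rm LAK}_{(1,1)}(N,2)=2^{N-1}$. The unique partition containing a class equal to $X$ is $\{\emptyset,X\}$, so excluding it gives ${\rm LAK}_{(1,\bar{1})}(N,2)=2^{N-1}-1$.

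For Part~(2), I would use Proposition~\ref{turntosets}(4): in any $(\bar{1},1)$-LA, every one of the $v$ classes in each partition is non-empty. For $v \geq 2$ (guaranteed by the standing hypothesis $v \leq N$ together with $v \geq 2$), a class equal to $\{1,\ldots,N\}$ would force every other class in that partition to be empty, a contradiction. Hence the extra $(\bar{1},\bar{1})$ requirement holds automatically, and ${\rm LAK}_{(\bar{1},\bar{1})}(N,v)={\rm LAK}_{(\bar{1},1)}(N,v)$. There is no genuine obstacle in this theorem: the work is essentially a careful reading of the definitions combined with a one-line enumeration for $v=2$; the only mild point to be careful about is that ``independence'' trivializes for singleton test collections, so no further conditions are hidden in the $\bar{t}$ notation.
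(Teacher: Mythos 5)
Your proposal is correct and follows essentially the same route as the paper: you identify the only new constraint imposed by the $0$-way interaction $\sqcup$ (no class may equal $\{1,\ldots,N\}$), observe that such a class forces $v-1$ empty classes and is therefore already forbidden in a $(1,1)$-LA when $v\ge 3$ and in any $(\bar 1,1)$-LA, and settle $v=2$ by the same count of the $2^{N-1}$ two-class partitions with $\{\emptyset,X\}$ excluded. The only difference is cosmetic: you phrase the argument in the partition language of Proposition \ref{turntosets} while the paper phrases it in terms of $\rho_A$.
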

\begin{proof}
First note that ${\rm LAK}_{(1,1)}(N,2) = 2^{N-1}$.
It follows from the definitions that every $(1,\bar{1})$-LA$(N,v)$ is a $(1,1)$-LA$(N,v)$ and that every $(\bar{1},\bar{1})$-LA$(N,v)$ is a $(\bar{1},1)$-LA$(N,v)$.

Suppose that $A$ is an array with $N$ rows and $v \geq 2$ symbols such that $\rho_A(T) = \{1,\dots,N\}= \rho_A(\sqcup)$ for some 1-way interaction $T$.
Then there are $v-1$ other 1-way interactions $T_1, \dots, T_{v-1}$ involving the same column as $T$, and $\rho_A(T_i) = \emptyset$ for $1 \leq i \leq v-1$.
Thus $\rho_A(T_1) = \rho(\emptyset)$ and $A$ is not a $(1,\bar{1})$-LA$(N,v)$.
Also, if $v \geq 3$, then $\rho_A(T_1)=\rho_A(T_2)$ and $A$ is not a $(1,1)$-LA$(N,v)$.
It follows that every $(\bar{1},1)$-LA$(N,v)$ is a $(\bar{1},\bar{1})$-LA$(N,v)$ and ${\rm LAK}_{(\bar{1},\bar{1})}(N,v) = {\rm LAK}_{(\bar{1},1)}(N,v)$.
Also, if $v \geq 3$, every $(1,1)$-LA$(N,v)$ is a $(1,\bar{1})$-LA$(N,v)$ and ${\rm LAK}_{(1,\bar{1})}(N,v) = {\rm LAK}_{(1,1)}(N,v)$.

Finally, when $v=2$, a $(1,1)$-LA$(N,v)$ $A$ with $2^{N-1}$ columns necessarily has a 1-way interaction $T$ with $\rho_A(T) = \{1,\dots,N\}$, because there are $2^N$ $1$-way interactions and each must have a distinct subset of $\{1,\ldots,N\}$ as its image under $\rho_A$. Hence no $(1,\bar{1})$-locating array can have $2^{N-1}$ columns. Removing the single column of $A$ containing the 1-way interaction $T$ with $\rho_A(T) = \{1,\dots,N\}$ yields a $(1,\bar{1})$-locating array with $2^{N-1}-1$ columns.
\end{proof}

\section{Concluding remarks}\label{sec:asym}
We close with some remarks on the asymptotic sizes of covering and locating arrays.
In \cite{CMjoco}, covering arrays of strength $t+1$ are used to construct $(1,t)$-locating arrays.
Hence it is natural to compare strength two covering arrays with (1,1)-locating arrays.
For $v$ fixed and $k \rightarrow \infty$, the smallest number of rows in a covering array with strength two is $N = \frac{v}{2} (\log k) (1+\mbox{o}(1))$  \cite{GKV2}.
(In this section, all logarithms are base 2.)
We consider (1,1)-locating arrays, the other three variants being similar.
For $f = \lfloor \frac{N+1}{v} \rfloor$ and $k = {\rm LAK}_{(1,1)}(N,v)$, by Theorem \ref{mainThm},  \[
\frac{1}{v+1} \sum_{i=0}^f \binom{N}{i} - 1 \leq  k \leq \sum_{i=0}^f \binom{N}{i} . \]

The {\em binary entropy function} $H(\ell) = -\ell \log \ell - (1-\ell) \log (1-\ell)$.
According to Ash \cite{Ash65} setting $\varepsilon = \frac{f}{N}$ we have
\[ \frac{2^{H(\varepsilon)N}}{\sqrt{8N \varepsilon (1-\varepsilon)}}  \leq \sum_{i=0}^f \binom{N}{i} \leq 2^{H(\varepsilon)N} . \]
Combine these to get \[
 \frac{2^{H(\varepsilon)N-1}}{(v+1)\sqrt{8N \varepsilon (1-\varepsilon)}} \leq\frac{2^{H(\varepsilon)N}}{(v+1)\sqrt{8N \varepsilon (1-\varepsilon)}}  - 1 \leq k \leq 2^{H(\varepsilon)N} . \]
 Take logarithms to get \[
H(\varepsilon)N -  1 -\log (v+1)  - \frac{1}{2} \log [8N \varepsilon (1-\varepsilon)]  \leq \log k \leq H(\varepsilon)N . \]
As $k \rightarrow \infty$ with $v$ fixed, $\varepsilon \rightarrow \frac{1}{v}$ and $H(\varepsilon) \rightarrow \frac{v\log v - (v-1) \log (v-1)}{v}$.
It  follows that $N = \frac{v}{v\log v - (v-1) \log (v-1)} ( \log k) + O(\log\log k)$.
Hence when $v > 2$, (1,1)-locating arrays employ substantially fewer rows than do covering arrays of strength two.

\section*{Acknowledgments}\label{ackref}
This research was completed while the second author was visiting Arizona State University.
He expresses his sincere thanks to China Scholarship Council for financial support and to the School of Computing, Informatics, and Decision Systems Engineering at Arizona State University for their kind hospitality.

\def\cprime{$'$}

\end{document}